\newtheorem{Thm}{Theorem}[section]
\newtheorem{MainTheorem}{Theorem}
\newtheorem{Lem}[Thm]{Lemma}
\newtheorem{Def}[Thm] {Definition}
\newtheorem{Cor}[Thm]{Corollary}
\theoremstyle{remark}
\newtheorem{Rem} [Thm]{Remark}
\theoremstyle{claim}
\newtheorem{Que}[Thm]{Question}
\DeclareMathOperator{\Diff}{Diff}
\def\Leb{{\mbox{Leb}}}
\def\dim{{\mbox{\footnotesize{dim}}}}
\def\topo{{\mbox{\footnotesize{top}}}}
\begin{document}

\begin{center}
{\Large \bf   Dominated Splitting, Partial Hyperbolicity \\ \vspace{.3cm} and Positive Entropy }\\
\end{center}


\smallskip
\begin{center}
Eleonora Catsigeras$^*$
\end{center}
\begin{center}
Instituto de Matem\'{a}tica y Estad\'{\i}stica \lq\lq Rafael Laguardia\rq\rq (IMERL), Facultad de Ingenier\'{\i}a,
Universidad de la Rep\'{u}blica. Uruguay\\
\end{center}
\begin{center}
E-mail: eleonora@fing.edu.uy
\end{center}

\begin{center}
Xueting Tian$^{**}$
\end{center}
\begin{center}  School of Mathematical Sciences, Fudan University, Shanghai 200433, People's Republic of China \\
\end{center}
\begin{center}
E-mail: xuetingtian@fudan.edu.cn; tianxt@amss.ac.cn
\end{center}
\smallskip

\bigskip

\footnotetext {$^*$E. Catsigeras is partially supported by
Agencia Nacional de Investigaci\'{o}n e Innovaci\'{o}n (ANII), Comisi\'{o}n Sectorial de Investigaci\'{o}n Cient\'{\i}fica (CSIC) of Universidad de la Rep\'{u}blica, and Proyecto L'Or\'{e}al-Unesco-Dicyt of Uruguay. }
\footnotetext {$^{**}$X. Tian
is supported by National Natural Science Foundation of China(No. 11301088). }

 \footnotetext{ Key words and
phrases: Dominated Splitting and Partial Hyperbolicity; Positive Topological Entropy and Positive Metric Entropy; Volume, Smooth, SRB, SRB-like Measures} \footnotetext {MSC 2010: 37D30;  37B40; 37D25;  37A35; }

\begin{abstract}

{\large Let $f:M\rightarrow M$ be a $C^1$  diffeomorphism with a dominated splitting on a   compact Riemanian manifold $M$ without boundary.
    We state and prove several   sufficient conditions for the topological entropy of $f$ to be positive. The conditions deal with   the dynamical  behaviour of the (non-necessarily invariant) Lebesgue measure. In particular, if the Lebesgue measure is  $\delta$-recurrent  then the entropy of $f$ is positive.  We give  counterexamples showing that these sufficient conditions are not necessary. Finally, in the case of partially hyperbolic diffeomorphisms,  we give a positive lower bound for the entropy   relating it with the dimension of the unstable and stable sub-bundles.}

\end{abstract}

\newpage

\section{Introduction}

Let $M$ be a compact boundary-less and connected manifold of finite dimension. Denote by $\mbox{Diff}^1(M)$ the space of $C^1$-diffeomorphisms  $f: M \mapsto M$. It is known that every Anosov system $f \in \mbox{Diff}^1(M)$ (or more generally, any horseshoe in a forward invariant open  set of $M$)  has positive entropy. Besides, due to the structural stability of Anosov diffeomorphisms,  any system $g \in \mbox{Diff}^1(M)$ close to $f$ is topologically conjugated to $f$. Thus,   the entropy function $$ h_{\topo}(\cdot):\Diff^1(M)\rightarrow\mathbb{R},\,f\mapsto h_{\topo}(f),$$ restricted to Anosov systems, is locally positively constant.

The positive entropy of Anosov systems is mainly obtained from its uniformly    hyperbolic behaviour.  But, since the uniform hyperbolicity is not a dense property in the whole space of differentiable dynamical systems,  researchers started to study other systems with some types of {\it weak hyperbolic properties}, such as nonuniform hyperbolicity,
 partial hyperbolicity and dominated splitting.

 On the one hand, it is  known that  non-uniformly hyperbolic system having at least one   positive Lyapunov exponent, have positive topological entropy.  Precisely, if $f$ is $C^{1+\alpha}$ and preserves a non-atomic ergodic hyperbolic measure, the classical $C^{1+\alpha}$ Pesin theory allows to prove that there is horseshoe. So, $f$ has positive entropy.

 On the other hand, partially hyperbolic systems also have   positive entropy, after the  recent result in \cite{SSV}.

 To extend these known results, in this paper we study the entropy of  diffeomorphisms  with (uniform and global) dominated splitting. This class of systems, which we denote by $\mbox{Diff}^1_{DS}(M)$, includes but is not reduced to   partially hyperbolic diffeomorphisms. Since the partially hyperbolic systems have positive entropy, the following question naturally arises:

 \begin{Que}\label{Que}

Has any $f$  in \em $\mbox{Diff}^1_{DS}(M)$ \em positive topological entropy?


\end{Que}


The answer is negative. In fact,   Gourmelon and Potrie \cite{Gourmelon-Potrie} have recently constructed a zero-entropy diffeomorphism on the torus $\mathbb{T}^2$ with dominated splitting.  For a seek of completeness we include this example in   Subsection \ref{subsectionExampleGourmelon-Potrie}.

After the negative answer of Question \ref{Que},  we
focus   on the search of  conditions for diffeomorphisms with dominated splitting such that:

 a) They include a much more general subfamily of diffeomorphisms in $\mbox{Diff}^1_{DS}$ than the partially hyperbolic ones.

 b) They imply   $h_{\topo}(f) >0$.

\vspace{.3cm}

 Along this paper we will state and prove several theorems that give  such kind of sufficient conditions. Our   results are based in the study of the dynamical behaviour of the (not necessarily invariant) volume measure on the manifold (the Lebesgue measure).

 In Section \ref{sectionStatements} we state the definitions and the main new results to be proved (Theorems \ref{Theorem1} to \ref{Theorem4}).
 Theorem 1 states that if the measurable sets with large Lebesgue measure have certain property of recurrency, then the entropy of the diffeomorphism with dominated splitting is positive. Theorem 2  assumes conditions on the so  called \em essential Lambda exponents. \em    One of these numbers is  the essential supremum w.r.t. Lebesgue  of the sum of Lyapunov-like exponents, which are defined for all the points $x \in M$. If the system has a dominated splitting and the essential Lambda-exponent  (which may be negative) is not very small, then the entropy of $f$ is positive. Theorems 3 and 4 hold  for particular cases: diffeomorphisms that preserve a smooth measure, and partially hyperbolic systems, respectively.

  Also in Section \ref{sectionStatements} we state and prove the immediate corollaries that are obtained from the  four main theorems. In Sections \ref{sect-3}, \ref{sect-4}, \ref{sectionProofSmooth} and \ref{sectionFinal} we prove the four main theorems.  
  Finally, in the Appendix (Section \ref{sectionExamples}) we provide   examples to prove that the answer to Question \ref{Que} is negative, and to show   that the converse statement  of Theorems \ref{Theorem1} is false.

\section{Definitions and statement of the results.}

 \label{sectionStatements}

  Before stating the main results  let us
recall the following definitions:

\begin{Def}\label{def:dominated}{\bf (Dominated Splitting)}  \em Let $f:M\rightarrow M$ be a $C^1$ diffeomorphism on a compact and connected Riemannian manifold $M$ without boundary. Let $TM=E\oplus F$ be a  $Df$-invariant   and continuous splitting, which is defined in all the points of the tangent bundle, such that $dim(E)\cdot dim(F)\neq 0.$

We call $TM=E\oplus F$   a {\it $\sigma-$dominated splitting} (where $E$ is the dominated sub-bundle and $F$ is the dominating sub-bundle), if there exists $\sigma>1$ such that $$\frac{\|Df|_{E(x)}\|}{m(Df|_{F(x)})}\leq \sigma^{-1}, \forall x\in M,$$ where for any linear transformation $A$ we denote  \begin{equation} \label{eqnMinimumNorm} m(A) := \displaystyle{\min_{\|u\|= 1} \|A \, u \|}.\end{equation}

\end{Def}

\begin{Rem}
 \label{Rem001}
 In Definition \ref{def:dominated}, the continuity of the splitting is redundant: it can be deduced from its $Df$-invariance and from the $\sigma-$ domination inequality (see for example \cite{BDV}). Since the manifold $M$ is assumed to be connected, the dimensions of the sub-bundles $E$ and $F$ are constant.

 \end{Rem}

\begin{Rem}
 \label{Rem002}
From the $\sigma-$ domination inequality of Definition \ref{def:dominated} we obtain:
$$\frac{\|Df^k|_{E(x)}\|}{m(Df^k|_{F(x)})}\leq
\prod_{i=0}^{k-1}\frac{\|Df|_{E(f^i(x))}\|}{m(Df|_{F(f^i(x))})}\leq \sigma^{-k}< \sigma^{-1}  \ \ \forall x\in M, \ \ \forall \ k \geq 1.$$ This means that, if  $T_MM=E\oplus F$ is a  $\sigma-$dominated splitting of $f$, then $T_MM=E\oplus F$ is also a  $\sigma-$dominated splitting of  $f^k$  for any integer number $k \geq 1$.
\end{Rem}

\begin{Rem}
 \label{Rem003}

The following is an equivalent definition of $\sigma-$dominated splitting  for some $\sigma >1$:
$TM=E\oplus F$ is a {\it dominated splitting} if there exists $C>0$ and $0<\lambda<1$ such that $$\frac{\|Df^n|_{E(x)}\|}{m(Df^n|_{F(x)})}\leq C \lambda^n  \ \ \forall \ x\in M, \ \ \forall \  n\geq 1.$$ In fact,  Gourmelon (\cite{Gour}) has proved that if the last inequality holds, then there  exists an adapted Riemannian
metric in the manifold $M$ for which $C=1$. So Definition \ref{def:dominated} holds.
\end{Rem}

\begin{Def}
\label{definitionRecurrentMeasures} \em {\bf (Measurable recurrence)} \em

Let $f: M \mapsto M$ be an homeomorphism.
We call a measurable set $B \subset M$ \em   recurrent to the future if    there exists $n_j \rightarrow + \infty$ such that $f^{n_j}(B) \cap B \neq \emptyset$ for all $j \geq 0$.

Let $\rho$ be a (non necessarily $f$-invariant) probability measure on $M$ and let $\delta$ be a real number such that $0 <\delta < 1$. We say that  the measure $\rho$ is  \em   $\delta$-recurrent by $f$, \em if any measurable set $B \subset M$ such that      $\rho(B) > 1 - \delta$ is recurrent.

\vspace{.3cm}

We say that the measure $\rho$ is \em recurrent by $f$ \em if it is $\delta$-recurrent for all $\delta$  such that $0 <\delta < 1$.

 It is immediate to check that if $\rho$ is $f$-invariant then $\rho$ is recurrent. In fact, if $B$ is a measurable set such that $\rho(B)>0$, applying Poincar\'{e} Recurrence Lemma we obtain  $f^n(B) \cap B \neq \emptyset $ for arbitrarily large values of $n $.

\end{Def}

Now we are ready to state our first main theorem:


\begin{MainTheorem}\label{Dom-MainTheorem0} \label{Theorem1}
 Let $f:M\rightarrow M$ be a $C^1$  diffeomorphism on a   compact Riemanian manifold $M$ exhibiting  a  $\sigma-$dominated splitting  $T M=E\oplus F$
  \em (with $\sigma>1$). \em Assume that   the Lebesgue  measure on $M$ is $\delta$-recurrent for $f$ for some $0 < \delta < 1$. Then the topological entropy of $f$ is positive.
   \end{MainTheorem}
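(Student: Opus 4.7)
My plan is to argue by contrapositive: assume $h_{\topo}(f)=0$ and, for every $\delta\in(0,1)$, construct a measurable set $B\subset M$ with $\Leb(B)>1-\delta$ and $f^n(B)\cap B=\emptyset$ for all sufficiently large $n$, contradicting the assumed $\delta$-recurrence of Lebesgue.

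First I would extract, from the zero-entropy hypothesis and the $\sigma$-domination, the existence of an $f$-invariant ergodic probability measure $\mu$ with no positive Lyapunov exponent in an essential sense. Two ingredients are combined: (i) the theorem of Saghin--Sun--Vargas quoted in the introduction (partial hyperbolicity implies positive entropy in $C^1$), so that $h_{\topo}(f)=0$ rules out partial hyperbolicity and forces the dominating sub-bundle $F$ to fail uniform expansion; and (ii) a Ma\~n\'e/Cao/Pliss-type selection argument applied to the subadditive cocycle $\log m(Df^n|_F)$, producing $\mu$ with non-positive smallest $F$-exponent. The $\sigma$-domination inequality of Remark~\ref{Rem002} then bounds every $E$-Lyapunov exponent of $\mu$ from above by $-\log\sigma<0$, giving strict uniform contraction in $E$ together with non-expansion in $F$.

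Next I would pass from this measure-theoretic statement to a Lebesgue-almost-everywhere dynamical statement via the theory of SRB-like measures in the sense of Catsigeras--Enrich. Define
\[
\Lambda\;=\;\overline{\bigcup\bigl\{\mathrm{supp}(\nu)\,:\,\nu\text{ is an SRB-like measure of }f\bigr\}},
\]
so that $\omega(x)\subset\Lambda$ for $\Leb$-a.e.\ $x\in M$. The crucial claim is $\Leb(\Lambda)=0$: because every invariant measure carried by $\Lambda$ inherits the non-expanding $F$/contracting $E$ structure of the previous step, a volume-collapse computation driven by the uniform $\sigma$-contraction of $E$ rules out $\Leb(\Lambda)>0$. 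Once both facts are in hand, fix $\delta\in(0,1)$, choose an open neighbourhood $U\supset\Lambda$ with $\Leb(U)<\delta/2$, and use Egorov-type measurability to pick $N_0$ such that
\[
C\;:=\;\bigl\{x\in M\,:\,f^n(x)\in U\text{ for every }n\ge N_0\bigr\}
\]
has $\Leb(C)>1-\delta/2$. Setting $B:=C\setminus U$ gives $\Leb(B)>1-\delta$, while $f^n(B)\subset U$ and $B\cap U=\emptyset$ force $f^n(B)\cap B=\emptyset$ for every $n\ge N_0$, contradicting $\delta$-recurrence.

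The main obstacle I expect is the second step: in $C^1$ regularity one cannot invoke Pesin theory off-the-shelf to link Lebesgue-typical orbits to the non-expanding invariant measures produced in step one, so the absorption claim $\omega(x)\subset\Lambda$ for $\Leb$-a.e.\ $x$ and the null-Lebesgue of $\Lambda$ must be obtained through careful use of SRB-like measures, Kingman's subadditive ergodic theorem on the cocycles $\log m(Df^n|_F)$ and $\log\|Df^n|_E\|$, and the uniform contraction of $E$ supplied by the $\sigma$-domination inequality. The first step (Ma\~n\'e/Cao extraction of a non-expanding invariant measure) is standard in the dominated-splitting literature, and the final absorbing-basin construction of $B$ in the third step is routine once the attraction statement is available.
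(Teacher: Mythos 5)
Your contrapositive strategy stands or falls on two claims, and both have genuine gaps. The first is the absorption claim: from Theorem~\ref{TheoremCE} (the SRB-like theory of Catsigeras--Enrich) one only gets that for $\Leb$-a.e.\ $x$ the \emph{limit measures} of the empirical averages $\frac1n\sum_{i<n}\delta_{f^i(x)}$ are SRB-like, i.e.\ $p\omega(x,f)\subset{\mathcal O}_f$. This controls time averages, not the topological $\omega$-limit set, and it does \emph{not} imply $\omega(x)\subset\Lambda$ nor that the orbit of $x$ is eventually trapped in a neighborhood $U$ of $\Lambda$. An orbit can have all its empirical limits supported on $\Lambda$ while leaving any fixed neighborhood of $\Lambda$ infinitely often with vanishing frequency (Bowen's eye is the standard picture: the limit measures sit on the saddles, the orbit keeps circulating along the connections). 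Consequently your set $C=\{x: f^n(x)\in U\ \forall n\ge N_0\}$ need not have large measure --- it can be empty --- and the construction of the non-recurrent set $B=C\setminus U$ collapses. The second gap is $\Leb(\Lambda)=0$: your step~1 (Saghin--Sun--Vargas plus a Ma\~n\'e/Cao-type selection on $\log m(Df^n|_F)$) produces \emph{one} invariant measure with non-positive smallest $F$-exponent, and the domination then gives negative $E$-exponents $\mu$-a.e.\ for \emph{that} measure; this is neither ``uniform contraction'' nor a statement about every invariant measure carried by the closed set $\Lambda$. A volume-collapse argument for $\Lambda$ would require a uniform negative bound on $\int\log|\det Df|\,d\nu$ over \emph{all} invariant $\nu$ supported on $\Lambda$ (including non-SRB-like measures living on the closure of the union of supports), and nothing in your outline supplies it; even the inequality of Theorem~\ref{theoremCCE}, which you never invoke, only constrains the SRB-like measures themselves.

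For contrast, the paper never argues through an attractor-like set. It shows directly (Lemma~\ref{lemmaAddedByEleonora}) that $\delta$-recurrence of $\Leb$ forces $\lambda_{ess}^{TM,f}\ge 0$ or $\lambda_{ess}^{TM,f^{-1}}\ge 0$: if both forward and backward Jacobians decayed exponentially a.e., a set $C_N$ with $\Leb(C_N)>1-\delta$ would satisfy $\Leb(C_N\cap f^n(C_N))=0$ for all $n\ge N$, yielding a non-recurrent set of measure $>1-\delta$. Then Theorem~\ref{Theorem2} converts the non-negative essential Lambda-exponent into positive entropy by picking a Lebesgue-typical point realizing it, taking a weak$^*$ limit $\mu\in p\omega(x,f)\subset{\mathcal O}_f$ of its empirical measures, and combining the Pesin-like inequality $h_\mu(f)\ge\int\log|\det Df|_F|\,d\mu$ with the domination estimate. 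So the place where the recurrence hypothesis actually interacts with the dynamics is Jacobian decay along Lebesgue-typical orbits, not proximity of orbits to supports of SRB-like measures; if you want to salvage a contrapositive argument, it is this Jacobian mechanism, not the trapping-region construction, that you would need to run in reverse.
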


We will prove Theorem \ref{Dom-MainTheorem0} in Section \ref{sect-3}. We remark that the hypothesis of   Theorem \ref{Dom-MainTheorem0},  which assumes that the Lebesgue measure is $\delta$-recurrent for some $0 < \delta < 1$,  is not necessarily satisfied for all the diffeomorphisms that have a dominated splitting and positive topological entropy. In fact, in Subsection \ref{sectionExample} we provide  an example that shows that the converse of Theorem \ref{Dom-MainTheorem0} is false.

\bigskip

Before stating our second main theorem, we need the following definition:

\begin{Def}
\label{DefinitionLambdaEssential} {\bf (Essential Lambda-Exponents)} \em
For any $Df$-invariant continuous sub-bundle $G$, and for any $x \in M$, define  the    real numbers $\lambda^{G,f}(x) $ and $\lambda^{G,f^-1}(x)$   by the following equalities: $$\lambda^{G,f}(x): = \limsup_{n \rightarrow + \infty} \frac{1}{n} \log |\mbox{det}(Df^{n}_x|_{G(x)})|, \ \ \ \ \ \lambda^{G,f^{-1}}(x): = \limsup_{n \rightarrow + \infty} \frac{1}{n} \log |\mbox{det}(Df^{-n}_x|_{G(x)})|. $$
 We call the following real numbers   \em essential Lambda-exponents   along $G$, \em to the future and the past respectively: $$\lambda_{ess}^{G,f}:= \mbox{Leb-ess sup } \lambda^{G,f}(x) , \ \ \ \ \lambda_{ess}^{G,f^{-1}}:= \mbox{Leb-ess sup } \lambda^{G,f^{-1}}(x), $$ where $\mbox{Leb-ess sup }  \psi(x)$ denotes the essential supremum, with respect to the Lebesgue measure, of the measurable  real function $\psi$.

\end{Def}

\begin{MainTheorem}
\label{Dom-MainTheorem} \label{Theorem2}
Let $f:M\rightarrow M$ be a $C^1$  diffeomorphism on a   compact Riemanian manifold $M$ exhibiting  a  $\sigma-$dominated splitting  $T M=E\oplus F$ for $f$  \em (with $\sigma>1$). \em
If at least one of the following inequalities holds: \em
  \begin{eqnarray}
  \label{equation41}
 \lambda_{ess}^{TM,f} \ \ &>& -\mbox{ dim} (E) \log \sigma \\
 \label{equation42}
  \mbox{ or }  \ \ \ \   \lambda_{ess}^{TM,f^{-1}} &> & -\mbox{ dim} (F) \log \sigma  \\
 \label{equation43}
 \mbox{ or } \ \ \ \  \lambda_{ess}^{F,f} \ \ \ \ \   &>& \ \ \ 0  \\ \label{equation43b}
 \mbox{ or } \ \ \ \   \lambda_{ess}^{E,f^{-1}}\ \    &>&\ \ \ 0,
  \end{eqnarray}
\em  then     the topological entropy of
   $f$ is positive.
\end{MainTheorem}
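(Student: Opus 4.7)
The strategy is twofold: first I reduce the four hypotheses to~(\ref{equation43}), and then I show that~(\ref{equation43}) forces a $\dim F$-dimensional disk tangent to a narrow cone around $F$ to grow exponentially in volume under iteration, which yields positive topological entropy by the Newhouse volume-growth inequality.

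\emph{Reduction to~(\ref{equation43}).} Iterating the $\sigma$-domination (Remark~\ref{Rem002}) and combining with the elementary bounds $|\det Df^n|_{E(x)}|\le\|Df^n|_{E(x)}\|^{\dim E}$ and $m(Df^n|_{F(x)})\le |\det Df^n|_{F(x)}|^{1/\dim F}$ yields the pointwise inequality
\[
\lambda^{TM,f}(x)\ \le\ -\dim(E)\log\sigma+\frac{\dim M}{\dim F}\,\lambda^{F,f}(x),
\]
so (\ref{equation41}) forces $\lambda^{F,f}(x)>0$ on a Lebesgue-positive set, which gives~(\ref{equation43}). Applied to $f^{-1}$, whose $\sigma$-dominated splitting is $TM=F\oplus E$, the same computation reduces~(\ref{equation42}) to~(\ref{equation43b}); and since~(\ref{equation43b}) for $f$ is~(\ref{equation43}) for $f^{-1}$ and $h_{\topo}(f)=h_{\topo}(f^{-1})$, it suffices to prove the theorem assuming~(\ref{equation43}).

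\emph{From~(\ref{equation43}) to exponential volume growth.} Pick $c>0$ and $A\subset M$ with $\Leb(A)>0$ and $\lambda^{F,f}(x)>c$ on $A$. For each $x\in A$, Pliss's lemma applied to the additive cocycle $\sum_{k=0}^{n-1}\log|\det Df|_{F(f^k x)}|$ produces a set $H(x)\subset\mathbb{N}$ of $(c-\epsilon)$-hyperbolic times of positive lower density, with the density bound uniform in $x$. Near a Lebesgue density point $x_0\in A$, by Fubini applied to a smooth local foliation tangent to a narrow invariant cone around $F$, I choose an embedded disk $D_0$ of dimension $\dim F$ through $x_0$ with $\Leb_{D_0}(A\cap D_0)>0$. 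Integrating the indicator of $H(\cdot)$ over $A\cap D_0$ and pigeonholing in $n$ produces $n_j\to\infty$ along which $G_{n_j}:=\{y\in A\cap D_0\colon n_j\in H(y)\}$ satisfies $\Leb_{D_0}(G_{n_j})\ge\theta>0$. The $\sigma$-domination keeps $T_y D_0$ in a narrow cone around $F(y)$ along the forward orbit, so $|\det Df^{n_j}|_{T_y D_0}|$ is comparable to $|\det Df^{n_j}|_{F(y)}|\ge e^{(c-\epsilon)n_j}$ on $G_{n_j}$, yielding
\[
\operatorname{vol}(f^{n_j}D_0)\ \ge\ \int_{G_{n_j}}|\det Df^{n_j}|_{T_y D_0}|\,dy\ \ge\ K\,\theta\,e^{(c-\epsilon)n_j}.
\]
Newhouse's volume-growth inequality $h_{\topo}(f)\ge\limsup_n\tfrac{1}{n}\log\operatorname{vol}(f^n D_0)$ then gives $h_{\topo}(f)\ge c-\epsilon>0$.

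\emph{Main obstacle.} The hard part will be the uniform lower bound $\Leb_{D_0}(G_{n_j})\ge\theta>0$: in the $C^1$ regime Pesin's theory is unavailable, so the pointwise $\limsup$ supplied by~(\ref{equation43}) cannot be upgraded by the usual graph-transform unstable-manifold construction. The dominated-splitting structure bypasses this difficulty by furnishing an invariant cone field around $F$, while Pliss's lemma converts the $\limsup$ into a uniform positive density of hyperbolic times; both ingredients are essential for the transverse disk $D_0$ to play the role of a Pesin unstable disk in the volume-growth estimate.
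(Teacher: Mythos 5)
Your reduction of (\ref{equation41}) to (\ref{equation43}) (and, via $f^{-1}$, of (\ref{equation42}) to (\ref{equation43b})) is essentially correct: since the angle between $E$ and $F$ is bounded away from zero, $\log|\det Df^n|$ equals the sum of the logarithms of the determinants along $E$ and along $F$ up to a bounded error, and the chain $|\det Df^n|_{E}|\le \|Df^n|_{E}\|^{\dim E}$, $\|Df^n|_{E}\|\le \sigma^{-n} m(Df^n|_{F})$, $m(Df^n|_{F})\le |\det Df^n|_{F}|^{1/\dim F}$ gives your pointwise inequality; this is a legitimate alternative to the paper's direct handling of (\ref{equation41}). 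The genuine gap is the last step. The inequality $h_{\topo}(f)\ge \limsup_n \frac1n \log \operatorname{vol}(f^n D_0)$ that you attribute to Newhouse is not available here: Newhouse's theorem is the \emph{reverse} inequality ($h_{\topo}\le$ volume growth, for $C^{1+\epsilon}$ maps), while the direction you need is Yomdin's, which holds for $C^\infty$ maps and, for $C^k$ maps, only with a defect term of order $\frac1k$ times the norm growth of $Df$ --- a term that is known to be unavoidable and is useless at $k=1$. So exponential volume growth of a disk does not yield positive topological entropy in the $C^1$ category, which is exactly the regularity class of the theorem; your concluding step is therefore unjustified. The known $C^1$ substitutes (entropy bounded below by unstable volume growth) require genuinely uniformly expanded unstable disks, i.e.\ partial hyperbolicity, which hypothesis (\ref{equation43}) does not provide; this is precisely the obstruction described in Remark \ref{Rem-Dom-Proof}.

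The paper avoids any disk or volume-growth estimate and argues measure-theoretically: by Theorem \ref{TheoremCE}, $p\omega(x,f)\subset {\mathcal O}_f$ for Lebesgue-a.e.\ $x$; one picks $x$ in the positive-Lebesgue-measure set where $\limsup_n \frac1n\log|\det Df^n_x|_{F(x)}|=r>0$, extracts a subsequence $n_j$ realizing this limit along which the empirical measures converge to some $\mu\in p\omega(x,f)$, so that $\int \log|\det Df|_F|\,d\mu=r$ by exact multiplicativity of $\det Df^n|_F$ and weak$^*$ convergence, and then applies the Pesin-like formula for SRB-like measures (Theorem \ref{theoremCCE}) to conclude $h_\mu(f)\ge r>0$; the case (\ref{equation41}) is handled by the same scheme plus the domination estimate you rediscovered. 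If you want to salvage your route you must replace the Yomdin-type step by such an argument (or prove a new $C^1$ volume-growth-to-entropy statement for disks tangent to dominating cones, which is not in the literature). Two secondary, in principle fixable, issues: Pliss's lemma applied at the times realizing a $\limsup$ gives positive \emph{upper} (not lower) density of hyperbolic times, and the uniform bound $\Leb_{D_0}(G_{n_j})\ge\theta>0$ along a single sequence $n_j$ requires a measurable-selection/pigeonhole argument that your outline does not supply.
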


We will prove Theorem \ref{Dom-MainTheorem} in Section \ref{sect-3}.

\begin{Cor}
\label{Cor-Dom-MainTheorem}
Let $f:M\rightarrow M$ be a $C^1$  diffeomorphism on a   compact Riemanian manifold $M$ exhibiting  a  $\sigma-$dominated splitting  $T M=E\oplus F$ for $f$  \em (with $\sigma>1$). \em Assume that   at least one of the following inequalities holds: \em
  \begin{eqnarray}
 \label{integrable-equation41}
 \ \ \ \ \ \ \ \ \  \limsup_{n \rightarrow + \infty} \frac{1}{n} \int\log |\mbox{det}(Df^{n}  (x))| d Leb \  & > & -\mbox{ dim} (E) \log \sigma \\  \mbox{ or } \ \ \ \
 \label{integrable-equation42}\limsup_{n \rightarrow + \infty} \frac{1}{n}\int \log |\mbox{det}(Df^{-n}  (x))| d Leb & > & -\mbox{ dim} (F) \log \sigma \\  \mbox{ or } \ \ \ \  \label{integrable-equation43}\limsup_{n \rightarrow + \infty} \frac{1}{n}\int \log |\mbox{det}(Df^{n}|_{F_x})|  d Leb \ \ \; &> &\ \ \  0 \\   \mbox{ or } \ \ \ \ \label{integrable-equation43b}
  \limsup_{n \rightarrow + \infty} \frac{1}{n} \int \log |\mbox{det}(Df^{-n}|_{E_x})| d Leb \ &> & \ \ \  0.\end{eqnarray}
 \em Then, the topological entropy of $f$ is positive.
\end{Cor}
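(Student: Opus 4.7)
The plan is to deduce each integral hypothesis of the Corollary from the corresponding essential Lambda-exponent hypothesis of Theorem \ref{Dom-MainTheorem}, so that the Corollary reduces directly to that theorem. The four cases (\ref{integrable-equation41})--(\ref{integrable-equation43b}) are completely symmetric under exchanging $f$ with $f^{-1}$ and the sub-bundle $TM$ with $F$ or $E$; I will therefore describe the argument only for (\ref{integrable-equation41}).

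First I would record that, since $Df$ is continuous on the compact manifold $M$, the function $x \mapsto \log|\mbox{det}(Df_x)|$ is bounded above by some constant $L$. Consequently, for every $n \geq 1$ and every $x \in M$, the function
$$g_n(x) := \frac{1}{n}\log|\mbox{det}(Df^n_x)|$$
satisfies $g_n(x) \leq L$, uniformly in $n$ and $x$. This uniform upper bound, together with the finiteness of Lebesgue measure on the compact manifold $M$, is exactly the hypothesis needed to apply the reverse Fatou lemma to the sequence $(g_n)$.

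Applying reverse Fatou, and then invoking $\limsup_n g_n(x) = \lambda^{TM,f}(x)$ (by Definition \ref{DefinitionLambdaEssential}) together with the pointwise bound $\lambda^{TM,f}(x) \leq \lambda_{ess}^{TM,f}$ that holds Lebesgue-a.e., I obtain (with Lebesgue normalized to be a probability measure)
$$\limsup_{n \to \infty} \frac{1}{n}\int \log|\mbox{det}(Df^n(x))|\, d\,\mbox{Leb} \;\leq\; \int \lambda^{TM,f}(x)\, d\,\mbox{Leb} \;\leq\; \lambda_{ess}^{TM,f}.$$
Hypothesis (\ref{integrable-equation41}) then forces $\lambda_{ess}^{TM,f} > -\dim(E)\log\sigma$, which is precisely hypothesis (\ref{equation41}) of Theorem \ref{Dom-MainTheorem}. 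Applying that theorem yields $h_{\topo}(f) > 0$. For the remaining cases, the same scheme works with $f$ replaced by $f^{-1}$ (for (\ref{integrable-equation42}) and (\ref{integrable-equation43b})) and with the tangent bundle replaced by the continuous sub-bundle $F$ or $E$ (for (\ref{integrable-equation43}) and (\ref{integrable-equation43b})); the required uniform bound $g_n \leq L$ is again immediate from compactness of $M$ and continuity of $Df^{\pm 1}$ restricted to the continuous sub-bundles (whose continuity is guaranteed by Remark \ref{Rem001}).

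There is no substantive obstacle here: the Corollary is essentially a routine consequence of Theorem \ref{Dom-MainTheorem} combined with reverse Fatou. The only point requiring mild care is verifying the uniform upper bound on $g_n$ that legitimizes the Fatou step, so that the limsup of integrals may be exchanged with the integral of the limsup in the correct direction — all subsequent inequalities are direct from the definitions.
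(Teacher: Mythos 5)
Your proposal is correct and follows essentially the same route as the paper, which likewise deduces the corollary from Theorem \ref{Dom-MainTheorem} by combining Definition \ref{DefinitionLambdaEssential} with Fatou's lemma (in its reverse form for sequences bounded above, exactly the step you justify via the uniform bound $g_n \leq L$ coming from compactness and continuity of $Df$ and of the sub-bundles). You simply spell out the details that the paper leaves as ``immediate,'' so there is nothing to correct.
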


\begin{proof}
The statement of Corollary \ref{Cor-Dom-MainTheorem} is an immediate consequence of   Theorem \ref{Theorem2}, taking into account   Definition \ref{DefinitionLambdaEssential} and applying   Fatou Lemma.
\end{proof}

\subsection {The smooth-invariant measure case} \label{subsectionStatementSmooth}

 In the particular case that   $f$ preserves a smooth measure $\mu$ (i.e., $\mu$ is absolutely continuous w.r.t. Lebesgue measure),   we obtain the following result, which is indeed an immediate corollary of Theorem \ref{Theorem1}:

 \begin{Cor}\label{Dom-MainTheorem-SmoothMeasure} \label{Corollary2.10}
 If $ f\in  \Diff_{DS}(M)$   preserves a smooth measure, then its topological entropy  is positive.

\end{Cor}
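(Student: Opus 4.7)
Since the statement is advertised as an immediate corollary of Theorem~\ref{Theorem1}, the plan is to deduce from the existence of an $f$-invariant smooth probability measure $\mu$ on $M$ that the Lebesgue measure is $\delta$-recurrent for some $\delta \in (0,1)$, and then to invoke Theorem~\ref{Theorem1} directly.

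Concretely, I would write $\mu = \phi\, d\Leb$ with $\phi\geq 0$ in $L^1(\Leb)$, and set $A := \{x\in M : \phi(x)>0\}$. Since $\mu$ is a probability measure, $\int_A \phi\, d\Leb = 1$ forces $\Leb(A) > 0$. Fix any $\delta$ with $0 < \delta < \Leb(A)$. For every Borel set $B\subset M$ with $\Leb(B) > 1-\delta$ the estimate
\[
\Leb(A\cap B) \;=\; \Leb(A) - \Leb(A\setminus B) \;\geq\; \Leb(A) - \Leb(M\setminus B) \;>\; \Leb(A) - \delta \;>\; 0,
\]
combined with $\phi > 0$ on $A\cap B$, yields $\mu(B) \geq \int_{A\cap B}\phi\, d\Leb > 0$. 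Applying the Poincar\'e recurrence lemma to the $f$-invariant probability $\mu$, one obtains $n_j \to +\infty$ with $\mu(f^{-n_j}(B)\cap B) > 0$, and in particular $f^{n_j}(B)\cap B \neq \emptyset$. This is precisely the $\delta$-recurrence of $\Leb$ in the sense of Definition~\ref{definitionRecurrentMeasures}, so Theorem~\ref{Theorem1} gives $h_{\topo}(f) > 0$.

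The only point that needs a little care is the direction of the comparison: absolute continuity $\mu\ll\Leb$ controls $\mu$ by $\Leb$, not the reverse, so a~priori a set of positive Lebesgue measure need not have positive $\mu$-measure. The above argument sidesteps this by working only with sets of \emph{near-full} Lebesgue measure, which are automatically forced to intersect the density-positive set $A$ in a subset of positive Lebesgue (and hence positive $\mu$) measure. Beyond this bookkeeping I do not anticipate any real obstacle, which is consistent with the corollary being advertised as immediate.
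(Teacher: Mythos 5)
Your proposal is correct and follows essentially the same route as the paper's own proof: both isolate the set where the density of $\mu$ with respect to $\Leb$ is positive, observe that any set of Lebesgue measure sufficiently close to $1$ meets it in a set of positive $\mu$-measure, invoke Poincar\'e recurrence for the invariant measure $\mu$ to get recurrence of that set, and conclude $\delta$-recurrence of $\Leb$ so that Theorem~\ref{Theorem1} applies. The only difference is cosmetic (you fix $\delta<\Leb(A)$ strictly, while the paper takes $\delta=\Leb(A)$ itself), so there is nothing to add.
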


  \begin{proof}
  Denote by $\Leb$ the Lebesgue probability measure on $M$. If $\mu $ is $f$-invariant, then $\mu$ is $f$-recurrent (for any $0 < \delta < 1$)  due to Poincar\'{e} Recurrence Lemma. Besides $\mu \ll \Leb$, and so, the measurable set $B$ where the density of $\mu$ is positive satisfies $\mu(B) = 1$. Denote $\alpha: =\Leb(B) >0$. Any measurable set $A$ such that $\Leb (A) > 1- \alpha$   intersects $B$ on a set $A \cap B$ with positive Lebesgue measure. Besides the density of $\mu$ at any point $x \in A \cap B$ is positive. Then $\mu(A \cap B) >0$. We deduce that $A \cap B$ is an $f$-recurrent set (because $\mu$ is $f$ invariant). So, $A$ is also a recurrent set. We have proved that any measurable set $A$ such that $\Leb(A) > 1 - \alpha$ is recurrent. From Definition \ref{definitionRecurrentMeasures}, $\Leb$ is   an  $\alpha$-recurrent measure.
   Finally, we apply Theorem \ref{Dom-MainTheorem0} to conclude that  $h_{\topo}(f) >0$. \end{proof}

    The proof of   Corollary \ref{Dom-MainTheorem-SmoothMeasure} can be also easily and independently deduced   from the following already known theorem:

\vspace{.3cm}

\noindent{\bf Theorem (Pesin-like formula, Sun-T. \cite{SunTian})}

\em If $f \in \Diff^1(M)$ has a $\sigma$-dominated splitting ($\sigma >1$) $TM = E \oplus F$, and if $\mu$ is a smooth $f$-invariant probability measure, then: \em
\begin{equation}
\label{eqnSunTian}
h_{\mu}(f) \geq \int  \log |\det Df|_F| \, d \mu = \int \sum_{i= 1}^{\mbox{\dim}(F)} \chi_{i} \, d \mu  ,\end{equation} \em
where $h_{\mu}(f)$ denotes the metric entropy of $f$ w.r.t the measure $\mu$ and \em $$\chi_1 \geq \chi_2 \geq \ldots \geq \chi_{\mbox{\footnotesize dim}(M)}$$ \em are the Lyapunov exponents  defined $\mu$-a.e. \em

\vspace{.3cm}

 \noindent {\bf Independent proof of Corollary \ref{Dom-MainTheorem-SmoothMeasure} deduced from Theorem Sun-T.}

 \vspace{.2cm}

 We include here   a different proof of Corollary \ref{Dom-MainTheorem-SmoothMeasure}, which is independent of Theorems \ref{Theorem1} and \ref{Theorem2}, because some of its arguments   will be useful to obtain further results.

 \begin{proof}
 Since $f^{-1}$ has also a dominated splitting, and $\mu \ll \mbox{Leb.}$ is also $f^{-1}$-invariant, we can apply Inequality (\ref{eqnSunTian}) to $f^{-1}$:
\begin{equation}
\label{eqnSunTian2}h_{\mu}(f) = h_{\mu}(f^{-1}) \geq -\int \sum_{j= \mbox{\footnotesize dim}(F) + 1}^{\mbox{\footnotesize dim}(M)} \chi_{j} \, d \mu \ \ \ \mbox{ if } \mu \ll \mbox{Leb.} \end{equation}
Either $\displaystyle \int \sum_{j= \mbox{\footnotesize dim}(F) + 1}^{\mbox{\footnotesize dim}(M)} \chi_{j} \, d \mu >0$, and so by (\ref{eqnSunTian}) the entropy is positive, or  $\displaystyle \int \sum_{i= 1}^{\mbox{\footnotesize dim}(F)} \chi_{i} \, d \mu \leq 0. $ So, it is enough to prove that the entropy is also positive under the assumption that  $\displaystyle \int \sum_{i= 1}^{\mbox{\footnotesize dim}(F)} \chi_{i} \, d \mu \leq 0. $
 From the dominated splitting condition we obtain $$\chi_i \geq \log\sigma + \chi_j \ \ \forall \ 1 \leq i \leq \mbox{dim}(F) < \mbox{dim}(F) + 1 \leq j \leq \mbox{dim}(M).$$ Thus, we can bound from  above the integral at right in Inequality (\ref{eqnSunTian2}) as follows:
$$\int \sum_{j= \mbox{\footnotesize dim}(F) + 1}^{\mbox{\footnotesize dim}(M)} \chi_{j} \, d \mu  \leq \int \mbox{dim}(E) \cdot \Big(\log \sigma^{-1} + \min_{1 \leq i \leq \mbox{\footnotesize dim} F} \chi_i \Big) \, d \mu \leq $$ $$   \mbox{dim}(E)  \cdot \Big (\log \sigma^{-1} + \frac{1}{\mbox{dim} (F)} \cdot \int \sum_{i= 1}^{\mbox{\footnotesize dim}(F)} \chi_{i} \, d \mu \Big) \leq   \mbox{dim}(E) \cdot \log \sigma^{-1} < 0.$$
  So, Inequality (\ref{eqnSunTian2}) gives $h_{\mu}(f) >0$, ending the proof of Theorem \ref{Dom-MainTheorem-SmoothMeasure}.
\end{proof}

\vspace{.3cm}

    \bigskip

    We point out that the latter proof is adaptable to   systems that preserve a smooth probability measure, and that have a non-uniform and non-global almost dominated splitting,   according to the following definition:

    \begin{Def}
     \label{DefinitionAlmostDom} {\bf (Almost dominated splitting)} \em
       Fix a point $x\in M$ and denote its orbit $\{f^n(x)\}_{n \in \mathbb{Z}}$ by $orb(x)$. A splitting $$T_{orb(x)}M=E_{orb(x)}\oplus F_{orb(x)}$$ is called
\emph{$N(x)$-dominated at point $x$}, if it is $Df$-invariant  and there exists a constant $N(x)\in
\mathbb{Z}^+$ such that
$$\frac
{\|Df^{N(x)}|_{E(f^{j}(x))}\|}{m(Df^{N(x)}|_{F(f^{j}(x))})}\leq\frac12,\,\,\forall
\,j\in\mathbb{Z}.$$
Let $\mu$ be an $f-$invariant measure $\mu$ and let $N(\cdot):M\rightarrow
\mathbb{N}$ be an $f$-invariant measurable function. We say  $\mu$ has an {\it   almost dominated splitting,}  if for
$\mu\,a.\,\,e.\,\,x\in M,$ there is an $N(x)$-dominated splitting
$$T_{orb(x)}M=E_{orb(x)}\oplus F_{orb(x)}$$ at $x$. We say  $\mu$ has a {\it non-trivial almost dominated splitting,} if it has an almost dominated splitting and the set for which the following inequality holds has $\mu-$positive measure: $$\mbox{dim}(E(x)) \cdot \mbox{dim}(F(x))\neq 0.$$

Notice that if $\mu$ if $f$-invariant and has an almost dominated splitting for $f$, then it has an almost dominated splitting for $f^{-1}$.
 \end{Def}
Now we state the  main new result in the case of smooth invariant measure:

\begin{MainTheorem} \label{Theorem3}
\label{Smooth-Measure-Positive-Entropy} Let $f \in \mbox{Diff}^1(M)$   preserving a smooth measure $\mu$. Assume that $\mu$   has a non-trivial almost dominated splitting. Then $\mu$ has positive metric entropy, hence $f$  has positive topological entropy.

\end{MainTheorem}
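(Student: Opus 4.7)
The plan is to mimic the independent proof of Corollary~\ref{Corollary2.10} presented above, but with the uniform global $\sigma$-dominated splitting replaced by the measurable almost dominated splitting of Definition~\ref{DefinitionAlmostDom}, and with the constant gap $\log\sigma$ replaced by the pointwise gap $\log 2/N(x)$.

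First, I would reduce to the case in which $\mbox{dim}(E(x))$ and $\mbox{dim}(F(x))$ are both positive $\mu$-almost everywhere. The set
\[
A:=\{x\in M:\ \mbox{dim}(E(x))\cdot\mbox{dim}(F(x))\neq 0\}
\]
is $f$-invariant, since the splitting is invariant, and has $\mu(A)>0$ by hypothesis. The normalized restriction $\mu|_A/\mu(A)$ is an $f$-invariant probability, still absolutely continuous with respect to the Lebesgue measure on $A$, and $h_\mu(f)\geq \mu(A)\, h_{\mu|_A/\mu(A)}(f)$, so it suffices to treat this restriction. By Oseledec we obtain Lyapunov exponents $\chi_1(x)\geq\cdots\geq\chi_{\mbox{\footnotesize dim}(M)}(x)$ at $\mu$-a.e.\ $x$; iterating the $N(x)$-domination along $\mathrm{orb}(x)$ shows that the top $\mbox{dim}(F(x))$ exponents correspond to $F(x)$, the rest to $E(x)$, and yields the pointwise spectral gap
\[
\chi_i(x)\;\geq\;\chi_j(x)\;+\;\frac{\log 2}{N(x)}\qquad\text{for all}\ 1\leq i\leq\mbox{dim}(F(x))<j\leq\mbox{dim}(M).
\]

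Second, I would extend the Sun--T.~Pesin-like formula of \cite{SunTian} to the almost dominated setting, obtaining
\[
h_\mu(f)\;\geq\;\int\sum_{i=1}^{\mbox{\footnotesize dim}(F(x))}\chi_i(x)\,d\mu\qquad\text{and}\qquad h_\mu(f)\;\geq\;-\int\sum_{j=\mbox{\footnotesize dim}(F(x))+1}^{\mbox{\footnotesize dim}(M)}\chi_j(x)\,d\mu,
\]
the second by applying the same extension to $f^{-1}$, whose dominating subbundle is $E$. The idea for the extension is to decompose $\mu$ ergodically, note that $N$ and the dimensions of $E,F$ are a.e.\ constant on each ergodic component, and reproduce Sun--T.'s argument orbit-by-orbit for the iterate $f^{N_0}$ on the invariant set $\{N\leq N_0\}$, then integrate back to $\mu$.

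Third, I would run the dichotomy used in the independent proof of Corollary~\ref{Corollary2.10}: if $\int\sum_{i=1}^{\mbox{\footnotesize dim}(F(x))}\chi_i(x)\,d\mu>0$ then the first inequality already yields $h_\mu(f)>0$; otherwise that integral is $\leq 0$, and using the pointwise gap one bounds
\[
\sum_{j=\mbox{\footnotesize dim}(F(x))+1}^{\mbox{\footnotesize dim}(M)}\chi_j(x)\;\leq\;\mbox{dim}(E(x))\Bigl(\frac{1}{\mbox{dim}(F(x))}\sum_{i=1}^{\mbox{\footnotesize dim}(F(x))}\chi_i(x)\;-\;\frac{\log 2}{N(x)}\Bigr),
\]
whose $\mu$-integral is strictly negative because $\mbox{dim}(E(x))/N(x)>0$ on $A$. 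The second inequality then yields $h_\mu(f)>0$, and the variational principle gives $h_{\topo}(f)\geq h_\mu(f)>0$.

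The principal obstacle is the second step: Sun--T.\ as recalled in the excerpt assumes a uniform global dominated splitting, whereas our splitting is only measurable and defined along individual orbits. The extension is delicate because ergodic components of a smooth measure need not themselves be smooth, so Sun--T.\ cannot simply be invoked componentwise. I expect that either a direct reworking of the proof in \cite{SunTian}, exploiting constancy of $N,\mbox{dim}(E),\mbox{dim}(F)$ on ergodic components together with Jacobian estimates along orbits in $\{N\leq N_0\}$, or a Ruelle-type inequality combined with dominated-splitting upper bounds, will do the job. Once this extension is in place, the rest of the argument is bookkeeping with the pointwise gap $\log 2/N(x)$ in place of the uniform $\log\sigma$ already handled in Corollary~\ref{Dom-MainTheorem-SmoothMeasure}.
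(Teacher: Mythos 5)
The load-bearing step of your plan --- the extension of the Sun--Tian Pesin-like inequality to smooth measures with an almost dominated splitting --- is exactly the point you leave unproven, so as written the proposal is not a complete proof. It is, however, a gap that closes by citation rather than by new work: the theorem of Sun--Tian recalled in the paper as Theorem \ref{PesFormula-Thm:positive} is already stated for a smooth $f$-invariant measure with an \emph{almost} dominated splitting (not only for a uniform global $\sigma$-dominated one, which is the version quoted in Subsection \ref{subsectionStatementSmooth}), and since an almost dominated splitting for $f$ is also one for $f^{-1}$, both inequalities you ask for follow directly, with no ergodic decomposition or orbit-by-orbit reworking needed. Be aware also that your fallback suggestion of ``a Ruelle-type inequality combined with dominated-splitting upper bounds'' cannot do the job: Ruelle's inequality bounds $h_\mu(f)$ from \emph{above} by the integrated sum of positive exponents and holds for every invariant measure, whereas what is needed here is a lower bound on entropy that genuinely uses the absolute continuity of $\mu$.

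Granting the Sun--Tian inequality, your pointwise-gap dichotomy would work, but it takes a different route from the paper and has one piece of bookkeeping to repair: when $\mbox{dim}(F(x))$ is not constant, the estimate
\[
\int\sum_{j>\mbox{dim}(F(x))}\chi_j\,d\mu\;\le\;\int\frac{\mbox{dim}(E(x))}{\mbox{dim}(F(x))}\sum_{i\le \mbox{dim}(F(x))}\chi_i\,d\mu\;-\;\log 2\int\frac{\mbox{dim}(E(x))}{N(x)}\,d\mu
\]
does not inherit negativity of its first term from $\int\sum_{i\le\mbox{dim}(F(x))}\chi_i\,d\mu\le 0$, because of the varying weight $\mbox{dim}(E(x))/\mbox{dim}(F(x))$; you should first restrict to an invariant positive-measure set on which $\mbox{dim}(F(x))=i_0$ is constant (possible since this dimension takes finitely many values). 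The paper instead removes all non-uniformity at the outset: it fixes such a set, restricts further to $B_L=\{N(x)\le L\}$ (invariant because $N$ is $f$-invariant, of positive measure for $L$ large), passes to $g=f^{L!}$, for which $\|Dg|_{E(x)}\|/m(Dg|_{F(x)})\le 1/2$ on $B_L$, applies the Sun--Tian inequality to $g$ and $g^{-1}$ with the smooth $g$-invariant measure $\nu=\mu|_{B_L}$, and adds the two bounds; the resulting uniform gap $\log 2$ between $\chi_{i_0}$ and $\chi_{i_0+1}$ yields $\bigl(\frac{1}{i_0}+\frac{1}{\mbox{dim}(M)-i_0}\bigr)h_\nu(g)\ge\log 2>0$, hence $h_{\topo}(f)=\frac{1}{L!}\,h_{\topo}(g)>0$. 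That reduction to a single iterate with uniform one-step domination is what lets the paper bypass both the $\log 2/N(x)$ bookkeeping and the dichotomy altogether.
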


We will prove Theorem \ref{Smooth-Measure-Positive-Entropy} in Section \ref{sectionProofSmooth}.
In particular, Theorem \ref{Smooth-Measure-Positive-Entropy} immediately implies the following corollary for volume-preserving diffeomorphisms. Let \em $\Leb$ \em denote the Lebesgue measure on $M$ (i.e. the volume measure), and let $\Diff_{Leb}^1(M)$ denote the space of all $C^1$ volume-preserving diffeomorphisms.

\begin{Cor}\label{Volume-Measure-Positive-Entropy} \label{corollary2.13} If $f\in \Diff_{Leb}^1(M) $   and    $\Leb$   has a non-trivial almost dominated splitting, then  $f$  has positive  entropy.

\end{Cor}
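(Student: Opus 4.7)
The plan is to derive Corollary \ref{corollary2.13} as a direct specialization of Theorem \ref{Smooth-Measure-Positive-Entropy}, taking $\mu = \Leb$. The three hypotheses of Theorem \ref{Smooth-Measure-Positive-Entropy} that need to be verified are: (i) $f \in \mathrm{Diff}^1(M)$, (ii) $\mu$ is a smooth $f$-invariant probability measure, and (iii) $\mu$ has a non-trivial almost dominated splitting in the sense of Definition \ref{DefinitionAlmostDom}.

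First I would observe that (i) is immediate from $f \in \Diff_{Leb}^1(M) \subset \Diff^1(M)$. For (ii), note that $\Leb$ is trivially absolutely continuous with respect to itself (with constant density $1$), so it qualifies as a smooth measure; and since $f \in \Diff_{Leb}^1(M)$, the measure $\Leb$ is $f$-invariant by definition. Condition (iii) is precisely the standing assumption of the corollary.

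With all hypotheses in place, Theorem \ref{Smooth-Measure-Positive-Entropy} applied to the invariant smooth measure $\mu = \Leb$ yields $h_{\Leb}(f) > 0$, and then the variational principle (or the trivial bound $h_{\topo}(f) \geq h_\nu(f)$ for any invariant measure $\nu$) gives $h_{\topo}(f) \geq h_{\Leb}(f) > 0$, which is the desired conclusion.

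There is no real obstacle here: the content of the corollary is entirely contained in Theorem \ref{Smooth-Measure-Positive-Entropy}, and the only ``work'' is to recognize that the volume measure is itself an invariant smooth measure when $f$ is volume-preserving. The proof is therefore essentially a one-line invocation of the preceding theorem.
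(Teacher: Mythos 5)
Your proof is correct and coincides with the paper's own argument: the corollary is stated there as an immediate specialization of Theorem \ref{Smooth-Measure-Positive-Entropy} to $\mu=\Leb$, which is smooth and $f$-invariant precisely because $f\in\Diff_{Leb}^1(M)$. Your extra remark about the variational principle is harmless but unnecessary, since the theorem already concludes positive topological entropy.
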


Now we joint Corollary \ref{corollary2.13} with the following known result:

\vspace{.3cm}

\noindent{\bf Theorem (Bochi-Viana  \cite{BV})}
\em There is a residual subset $\mathcal {R}\subseteq \Diff_{Leb}^1(M)$ such
that for every $f\in\mathcal {R}$ and for ${Leb}-$a.e.$x\in M$
the Oseledec splitting of $f$ is either trivial (i.e. all Lyapunov
exponents are zero) or  dominated at $x$. \em

\vspace{.3cm}

As a consequence of Theorem of Bochi-Viana and Corollary \ref{Volume-Measure-Positive-Entropy} one immediately obtains:
\begin{Cor}\label{Generic-Volume-Measure-Positive-Entropy}
There is a residual subset $\mathcal {R}\subseteq \Diff_{Leb}^1(M)$ such
that for every $f\in\mathcal {R}$, either for $\Leb -$a.e.$x\in M$
  all Lyapunov
exponents are zero, or  $\Leb$ has positive entropy and thus $f$  has positive entropy.

\end{Cor}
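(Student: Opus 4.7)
The plan is to combine the Bochi--Viana theorem quoted just above with Corollary \ref{Volume-Measure-Positive-Entropy} (equivalently, Theorem \ref{Smooth-Measure-Positive-Entropy} applied to $\mu=\Leb$). Let $\mathcal{R}\subseteq\Diff^1_{Leb}(M)$ be the residual set produced by Bochi--Viana, and fix $f\in\mathcal{R}$. By that theorem there is a Leb-full-measure $f$-invariant set $X\subseteq M$ that partitions into two $f$-invariant Borel subsets $X=X_0\sqcup X_1$, where on $X_0$ the Oseledets splitting is trivial (all Lyapunov exponents vanish) and on $X_1$ the Oseledets splitting along the orbit of $x$ is a non-trivial dominated splitting.

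I would then do a case analysis on $\Leb(X_1)$. If $\Leb(X_1)=0$, then $\Leb(X_0)=1$, so for $\Leb$-a.e.\ $x$ all Lyapunov exponents are zero, which is the first alternative of the statement. If instead $\Leb(X_1)>0$, the Oseledets decomposition on $X_1$ produces a $Df$-invariant measurable splitting $T_xM=E(x)\oplus F(x)$ with $\dim E(x)\cdot\dim F(x)\neq 0$ and with a domination inequality holding along each orbit in $X_1$. By definition of ``dominated at $x$,'' for each such orbit there is a positive integer $N$ with $\|Df^{N}|_{E(f^j(x))}\|/m(Df^{N}|_{F(f^j(x))})\leq 1/2$ for every $j\in\mathbb{Z}$; taking the smallest such $N$ gives a function $N:X_1\to\mathbb{Z}^+$ that is constant along orbits, and measurable as an infimum of countably many measurable conditions. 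Extending $N$ arbitrarily to the negligible complement, this realises $\Leb$ (viewed as an $f$-invariant measure) as having a non-trivial almost dominated splitting in the sense of Definition \ref{DefinitionAlmostDom}.

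With that in hand, Theorem \ref{Smooth-Measure-Positive-Entropy} applied to the smooth $f$-invariant measure $\mu=\Leb$ yields $h_{\Leb}(f)>0$, whence $h_{\topo}(f)\geq h_{\Leb}(f)>0$ by the variational principle; this is the second alternative. The only non-routine point is the one identified above: checking that Bochi--Viana's pointwise notion of ``Oseledets splitting dominated at $x$'' genuinely fits Definition \ref{DefinitionAlmostDom}, i.e.\ producing an $f$-invariant measurable $N(\cdot)$. This is not deep — it reduces to the countability of the integer parameter combined with the orbit-invariance of the domination inequality — but it is the one place where a small verification, rather than a direct quotation, is required.
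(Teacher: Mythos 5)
Your proposal is correct and follows exactly the route the paper intends: the paper deduces this corollary ``immediately'' by combining the quoted Bochi--Viana theorem with Corollary \ref{Volume-Measure-Positive-Entropy} (i.e.\ Theorem \ref{Smooth-Measure-Positive-Entropy} applied to $\mu=\Leb$), which is precisely your case analysis on the measure of the set where the Oseledets splitting is non-trivial and dominated. The only addition you make is the explicit verification that the minimal period $N(\cdot)$ is measurable and constant along orbits, a detail the paper leaves implicit, and your handling of it is fine.
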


It is   known (see \cite{Yang}) that  for any  $C^1$ diffeomorphism
$f$ far away from homoclinic tangencies and for any $f$-ergodic measure
$\nu$, the stable, center and unstable bundles of the
Oseledec splitting are dominated on $\textrm{supp}(\nu)$  (the support of $\nu$),  and besides   the center bundle is at most one dimensional.
 Then, one can use the Ergodic Decomposition Theorem (see for instance  \cite{Walter}) to obtain that for any $f$-invariant measure $\mu$, $\mu-$a.e. $x$ has an Oseledec splitting that is dominated  at $x$.

So, from Theorem \ref{Theorem3}, we deduce:

\begin{Cor}\label{PesFormula-Thm:inverse} Let $M$ be a Riemannian compact manifold with $dim(M)\geq 2$. Let $f: M\to M$ be a $C^1$ diffeomorphism  far from tangencies  that   preserves  a smooth invariant
measure $\mu$. Then $f$ has positive entropy.
\end{Cor}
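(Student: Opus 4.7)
The plan is to reduce Corollary \ref{PesFormula-Thm:inverse} directly to Theorem \ref{Theorem3} by verifying that the smooth invariant measure $\mu$ admits a non-trivial almost dominated splitting in the sense of Definition \ref{DefinitionAlmostDom}. The paragraph immediately preceding the corollary already supplies the essential ingredient: Yang's result combined with the Ergodic Decomposition Theorem gives, for $\mu$-almost every $x$, an Oseledec splitting $T_x M = E^s(x) \oplus E^c(x) \oplus E^u(x)$ that is dominated at $x$, with $\dim E^c(x) \le 1$. Two things remain: (a) to group these three bundles into a two-piece splitting $E(x) \oplus F(x)$ of positive dimension on both sides on a set of full $\mu$-measure, and (b) to promote the domination to the $N(x)$-domination of Definition \ref{DefinitionAlmostDom}.

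For (a), when $\dim E^s(x) \geq 1$ and $\dim E^u(x) \geq 1$ I would set $E(x) := E^s(x)$ and $F(x) := E^c(x) \oplus E^u(x)$; when exactly one of $E^s(x), E^u(x)$ is trivial, the assumption $\dim E^c \le 1$ together with $\dim M \geq 2$ still allows a regrouping (e.g.\ $E := E^c$, $F := E^u$ when $E^s = 0$ and $\dim E^c = 1$). The only obstructions would be $E^u(x) = T_xM$ (all Lyapunov exponents positive) or $E^s(x) = T_xM$ (all negative) on a $\mu$-positive set; the case $E^c(x) = T_xM$ is immediately excluded because $\dim M \geq 2 > \dim E^c$. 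The two remaining bad cases are ruled out by smoothness: if $U := \{x : \dim E^u(x) = \dim M\}$ had $\mu(U) > 0$, then $\Leb(U) > 0$ (since $\mu \ll \Leb$) and $|\det Df^n(x)| \to \infty$ on $U$, so Fatou's lemma gives $\Leb(f^n(U)) = \int_U |\det Df^n|\,d\Leb \to \infty$, contradicting $\Leb(f^n(U)) \leq \Leb(M) < \infty$. The analogous argument applied to $f^{-1}$ excludes the $E^s = T_xM$ case.

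For (b), on the support of each ergodic component $\nu$ of $\mu$, Yang's theorem provides a uniform dominated splitting, so $\|Df^n|_E\|/m(Df^n|_F)$ decays uniformly on $\mathrm{supp}(\nu)$; choose the least $N_\nu$ for which this ratio is $\le 1/2$ along every orbit in $\mathrm{supp}(\nu)$, and set $N(x) := N_{\nu_x}$, which is $f$-invariant and measurable by measurability of the ergodic decomposition. Combined with (a), this furnishes a non-trivial almost dominated splitting for $\mu$, and Theorem \ref{Theorem3} then yields $h_\mu(f) > 0$ and hence $h_{\mathrm{top}}(f) > 0$. The main obstacle is the bookkeeping in (a)—ensuring that the Oseledec splitting does not collapse to a single bundle on a positive-measure set—and the key tool there is the volume/Fatou argument exploiting $\mu \ll \Leb$; the measurable selection of $N(x)$ in (b) is essentially automatic from Yang's uniformity on each ergodic support.
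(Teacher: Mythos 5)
Your proposal follows the paper's own route: Yang's theorem combined with the Ergodic Decomposition Theorem gives a dominated Oseledec splitting at $\mu$-a.e.\ point, and Corollary \ref{PesFormula-Thm:inverse} is then deduced by feeding this non-trivial almost dominated splitting into Theorem \ref{Theorem3}. The only difference is that you spell out verifications the paper leaves implicit—grouping the three Oseledec bundles into two, excluding the all-positive/all-negative exponent cases by the Fatou/volume argument using $\mu \ll \Leb$, and choosing the $f$-invariant function $N(x)$ from the uniform domination on each ergodic support—and these steps are correct.
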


\noindent In particular Corollary \ref{PesFormula-Thm:inverse} holds for volume preserving diffeomorphisms far from tangencies.

 \subsection{The partially hyperbolic case} \label{subsectionPartialHyp}

\bigskip

The  definition of (strong) partial hyperbolicity requires the existence of a continuous splitting in three $Df$-invariant sub-bundles, such that one (which is called the unstable bundle) is uniformly expanding, other one (which is called the stable bundle) is uniformly contracting, and the third one (which is called the center bundle)   is dominated by the unstable bundle and dominates the stable one.

Here we adopt a  more general notion of partial hyperbolicity  by using a splitting into two sub-bundles:

 \begin{Def}
 \label{DefinitionPartialHyperbolicity} {\bf (Partial hyperbolicity)}  \em
 We call $TM=E\oplus F$    a {\it $Df$-partially hyperbolic splitting,}  if it is a dominated splitting   such that either the dominated sub-bundle $E$ is uniformly contracting by $Df$, or the dominating sub-bundle $F$ is uniformly expanding by $Df$. Precisely, besides the domination inequality of Definition \ref{def:dominated}, there exists $C>0$ and $ \alpha>1$ such that either \begin{equation}
  \label{eqn100a}
  {\|Df^n_x|_{E(x)}\|} \leq C \alpha^{-n}, \forall x\in M,\,\, \forall \, n\geq 1,\end{equation}  or   \begin{equation}
  \label{eqn100b} {\|Df^{-n}_x|_{F(x)}\|} \leq C \alpha^{-n}, \forall x\in M,\,\, \forall \, n\geq 1.\end{equation}

 A diffeomorphism $f: M \mapsto M $ is called \em partially hyperbolic \em if the tangent bundle has a  $Df$-partially hyperbolic splitting.
   \end{Def}

  According to Definition \ref{DefinitionPartialHyperbolicity}, Anosov diffeomorphisms  for instance,  are particular cases of partially hyperbolic  diffeomorphisms,  and these latter are particular cases of diffeomorphisms with (global and uniform) dominated splitting.

\begin{Thm}\label{PartialHyper-MainTheorem} {\bf (Saghin-Sun-Vargas) \cite{SSV}}

If $f \in \mbox{Diff}^1(M)$ is partially hyperbolic then the topological entropy of $f$ is positive.

\end{Thm}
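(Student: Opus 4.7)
The plan is to derive this theorem as an immediate corollary of Theorem \ref{Theorem2}. Indeed, Definition \ref{DefinitionPartialHyperbolicity} builds partial hyperbolicity on top of a $\sigma$-dominated splitting $TM=E\oplus F$ by adjoining either a uniform contraction hypothesis on $E$ (inequality \eqref{eqn100a}) or a uniform expansion hypothesis on $F$ (inequality \eqref{eqn100b}). I will show that in each of these two cases one of the four essential Lambda-exponent inequalities of Theorem \ref{Theorem2} is automatically satisfied, which yields $h_{\topo}(f) > 0$.

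First consider the case in which $F$ is uniformly expanded, so that $\|Df^{-n}_x|_{F(x)}\| \leq C\alpha^{-n}$ for all $x\in M$ and all $n\geq 1$, with $C>0$ and $\alpha>1$. Since $F$ is $Df$-invariant and $f$ is a diffeomorphism, $Df^n_x|_{F(x)}$ is invertible with inverse $Df^{-n}_{f^n(x)}|_{F(f^n(x))}$; the relation $m(A)=\|A^{-1}\|^{-1}$ applied to $A=Df^n_x|_{F(x)}$ and the preceding estimate give the dual lower bound $m(Df^n_x|_{F(x)}) \geq C^{-1}\alpha^{n}$. Consequently $|\mbox{det}(Df^n_x|_{F(x)})| \geq m(Df^n_x|_{F(x)})^{\mbox{dim}(F)} \geq C^{-\mbox{dim}(F)}\alpha^{n\,\mbox{dim}(F)}$. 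Taking $\tfrac{1}{n}\log$ and letting $n\to+\infty$ one obtains $\lambda^{F,f}(x) \geq \mbox{dim}(F)\log\alpha > 0$ pointwise for every $x\in M$, hence $\lambda_{ess}^{F,f} > 0$. Thus inequality \eqref{equation43} of Theorem \ref{Theorem2} is satisfied, and $h_{\topo}(f) > 0$.

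In the complementary case in which $E$ is uniformly contracted by $Df$, the same argument applied to $f^{-1}$ (whose dominating sub-bundle is $E$, now uniformly expanded) delivers $m(Df^{-n}_x|_{E(x)}) \geq C^{-1}\alpha^n$, from which $\lambda^{E,f^{-1}}(x) \geq \mbox{dim}(E)\log\alpha > 0$ for every $x\in M$. Hence $\lambda_{ess}^{E,f^{-1}} > 0$, verifying inequality \eqref{equation43b} of Theorem \ref{Theorem2}. There is no real obstacle beyond what is already packaged into Theorem \ref{Theorem2}: once that result is available, the passage from uniform infinitesimal hyperbolicity of one of the sub-bundles to positive topological entropy reduces to a routine verification of one of the Lambda-exponent hypotheses.
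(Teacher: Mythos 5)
Your proposal is correct and follows essentially the same route as the paper: it deduces the statement from Theorem \ref{Theorem2} by checking inequality \eqref{equation43} (respectively \eqref{equation43b}) via the duality $m(Df^n|_F)=\|Df^{-n}|_{F}\|^{-1}$ and a determinant lower bound under the uniform expansion (respectively contraction) hypothesis. The only cosmetic difference is that you bound $|\det Df^n|_F|$ by $m(Df^n|_F)^{\dim F}$, obtaining the slightly sharper pointwise bound $\dim(F)\log\alpha$ where the paper settles for $\log\alpha$; the argument is otherwise identical.
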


Let us see that the Theorem of Saghin-Sun-Vargas can be   proved also as a particular case of Theorem \ref{Theorem2}:

\vspace{.3cm}

\noindent{\bf Proof of Theorem \ref{PartialHyper-MainTheorem} as a corollary of Theorem \ref{Theorem2}.}

\begin{proof}

 On the one hand, if inequality (\ref{eqn100a}) holds, then:
$$|\det Df^{n}_{}|_{F(x)}| =  {|\det Df^{-n}_{ f^{n}(x) }|_{F(f^{n}(x))} |}^{-1}  \geq {\|Df^{-n}_{f^{n}(x)}|_{F(f^{n}(x))} \|}^{-1}  \geq C^{-1} {\alpha^n}  \ \ \ \forall \ x \in M. $$
 From the inequality above and Definition \ref{DefinitionLambdaEssential}, we obtain:
$$\lambda^{F,f}(x) = \limsup_{n \rightarrow + \infty} \frac{1}{n} \log |\det Df^{n}_{}|_{F(x)}| \geq \lim_{n \rightarrow + \infty} \frac{1}{n} (\log \alpha^n - \log C) = \log \alpha >0 \ \ \forall \ x \in M.$$
So $\lambda_{ess}^{F,f} \geq \log \alpha >0$, and by Theorem \ref{Theorem2} the entropy of $f$ is positive.
On the other hand, if inequality (\ref{eqn100b}) holds, the latter argument works with $f^{-1}$  instead of $f$ and the sub-bundle $E$ instead of $F$. So $\lambda_{ess}^{E, f^{-1}} \geq \log \alpha >0$, and by Theorem \ref{Theorem2} the entropy of $f$ is positive.  \end{proof}

\begin{Rem}\label{Rem-Dom-Proof}
We have shown that the new result stated on Theorem \ref{Dom-MainTheorem}  is a generalization of Theorem of  Saghin-Sun-Vargas  firstly proved in \cite{SSV}. The authors of \cite{SSV}   constructed a $n$-separated set on the unstable (or stable) manifold, and proved, using the uniformly exponential contraction along $E$, or the uniformly exponential expansion along $F$, that the cardinality of this $n$-separated set has positive exponential growth with $n$. This method does not work in the  general case of dominated splitting (without partial hyperbolicity)  because  there may not exist global uniform contraction or expansion in  the sub-bundles of the dominated splitting.  For this reason the proof of   Theorem \ref{Dom-MainTheorem} must take a different route than the first proof of Theorem \ref{PartialHyper-MainTheorem} in \cite{SSV}. We mainly base the proof of Theorem \ref{Theorem2} on some recent advances on Pesin's entropy formula for the so called \em SRB-like measures \em of $C^1$ diffeomorphisms with dominated splitting (\cite{CCE}).

\end{Rem}

The following Theorem \ref{Theorem4}  strengthens the Theorem of Saghin-Sun-Vargas \cite{SSV}. In fact, in Theorem \ref{Theorem4} we will provide an explicit positive lower bound $k$ of the topological entropy, and also an explicit description of a set of $f$-invariant probability measures whose metric entropies are lower bounded by $k$.

Before stating Theorem \ref{Theorem4},  we  recall the following definition, which was taken from \cite{CE}:

\begin{Def} {\bf The omega-limit  set  in the space of probabilities.} \em

Denote by ${\mathcal P}$ the space of probability Borel-measures on the manifold $M$, endowed with the weak$^*$ topology. Denote by ${\mathcal P}_f$ the set of $f$-invariant measures in ${\mathcal P}$. For any point $x \in M$, denote by $\delta_x$ the Dirac-probability measure supported on $\{x\}$. Construct the set
$$p\omega(x, f) := \Big\{\mu \in {\mathcal P}: \  \lim_{j \rightarrow + \infty} \frac{1}{n_j} \sum_{i=1}^{n_j - 1} \delta_{f^i(x)} = \mu \mbox{ for some sequence } \ n_j \rightarrow + \infty  \Big\}. $$
We call $p\omega(x, f)$ the \em   limit set in the space of probabilities \em of the future orbit  of $x$ by $f$.

It is standard to check that for all $x \in M$  the set $p\omega(x)$ is nonempty, weak$^*$-compact and contained in ${\mathcal P}_f$. Consider  also the nonempty weak$^*$-compact set $p \omega(x, f^{-1}) \subset {\mathcal P}_f$.
 We call it the \em   limit set in the space of probabilities \em of the past orbit  of $x$.

 \label{definitionpomega(x)}

\end{Def}

\begin{MainTheorem}\label{PartialHyper-SRB-like} \label{Theorem4}
Let $f:M\rightarrow M$ be a $C^1$ diffeomorphism on a   compact Riemanian manifold $M$ with a dominated splitting $TM=E\oplus F$.

 \noindent {\bf (a) }
If there exist $C>0$ and $\alpha > 1$ such that $ {\|Df^{-n}|_{F(x)}\|} \leq C \alpha^{-n}, \forall x\in M,\,\, n\geq 1,$   then for Lebesgue-almost all $x \in M$ and for all $\mu \in p\omega(x, f)$:
 $$  h_\mu(f)\geq \int \log|\det Df|_F|\, d \mu \geq \mbox{\em dim}(F)\log \alpha >0.$$

 \noindent {\bf (b) } If there exist $C>0$ and $\alpha > 1$ such that $ {\|Df^{n}|_{E(x)}\|} \leq C \alpha^{-n}, \forall x\in M,\,\, n\geq 1,$   then for Lebesgue-almost all $x \in M$ and for all $\mu \in p\omega(x, f^{-1})$:
 $$  h_\mu(f)\geq \int \log|\det Df^{-1}|_E|\, d \mu \geq \mbox{\em dim}(E)\log \alpha >0.$$

\end{MainTheorem}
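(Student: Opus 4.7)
The plan is to combine two ingredients: (i) the Pesin-like entropy inequality for SRB-like measures of $C^1$ diffeomorphisms with dominated splitting, which is available from \cite{CCE} and is alluded to in Remark \ref{Rem-Dom-Proof}; and (ii) a pointwise determinant estimate along $F$ (respectively, along $E$ for part (b)) that follows from the uniform expansion hypothesis. The strategy is first to reduce the entropy bound to an integral bound, and then to reduce the integral bound to a uniform pointwise lower bound using invariance of $\mu$.

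For part (a), I would first note that, by the standard theory of SRB-like measures of Catsigeras--Cerminara--Enrich, for Lebesgue-almost every $x \in M$ every measure $\mu \in p\omega(x,f)$ is SRB-like. Since $f$ has a dominated splitting $TM = E \oplus F$, the Pesin-like inequality proved in \cite{CCE} yields
\[
h_\mu(f) \;\geq\; \int \log \bigl|\det Df|_F\bigr|\, d\mu
\]
for every SRB-like measure $\mu$. This is the heart of the argument and supplies the left-hand inequality of the statement. The remaining task is to bound the right-hand side from below by $\dim(F)\log\alpha$.

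To obtain the pointwise bound, I would use the hypothesis $\|Df^{-n}|_{F(x)}\| \leq C \alpha^{-n}$ for all $x\in M$ and $n\geq 1$. This implies $m(Df^{n}|_{F(f^{-n}(x))}) \geq C^{-1}\alpha^{n}$, equivalently $m(Df^{n}|_{F(x)}) \geq C^{-1}\alpha^{n}$ for every $x$. Since all singular values of $Df^{n}|_{F(x)}$ dominate the minimum norm $m(Df^{n}|_{F(x)})$, we get
\[
\bigl|\det Df^{n}|_{F(x)}\bigr| \;\geq\; m\bigl(Df^{n}|_{F(x)}\bigr)^{\dim F} \;\geq\; C^{-\dim F}\, \alpha^{n \dim F}.
\]
Using $f$-invariance of $\mu$ together with the cocycle identity $\log|\det Df^{n}|_F| = \sum_{i=0}^{n-1} \log|\det Df|_F| \circ f^{i}$, I would then write
\[
\int \log\bigl|\det Df|_F\bigr|\, d\mu \;=\; \tfrac{1}{n}\int \log\bigl|\det Df^{n}|_F\bigr|\, d\mu \;\geq\; \tfrac{1}{n}\bigl(-\log C^{\dim F} + n\,\dim(F)\log\alpha\bigr),
\]
and letting $n \to \infty$ gives $\int \log|\det Df|_F|\, d\mu \geq \dim(F)\log\alpha > 0$, as desired.

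Part (b) is symmetric. Applying the same argument to $f^{-1}$, whose dominated splitting is $TM = F \oplus E$ (with the roles of dominating and dominated sub-bundles swapped), and using $p\omega(x, f^{-1})$ and the hypothesis $\|Df^{n}|_{E(x)}\| \leq C\alpha^{-n}$, one obtains $h_\mu(f^{-1}) \geq \int \log|\det Df^{-1}|_E|\,d\mu \geq \dim(E)\log\alpha$, and $h_\mu(f)=h_\mu(f^{-1})$ closes the argument. The principal obstacle is the careful invocation of step (i): one has to verify that the framework of \cite{CCE} really does apply to every $\mu \in p\omega(x,f)$ for Lebesgue-almost every $x$ (rather than merely to the SRB-like measures as an abstract class), and that the Pesin-like inequality there is stated with the dominating sub-bundle $F$ playing the role of the ``unstable'' direction, even though $F$ need not be uniformly expanding in the general dominated setting. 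Once this identification is in place, the determinant estimate and the invariance computation above complete the proof without further difficulty.
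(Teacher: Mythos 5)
Your proposal is correct and follows essentially the same route as the paper: both parts reduce to the Pesin-like inequality of \cite{CCE} applied to the measures in $p\omega(x,f)$, which are SRB-like for Lebesgue-almost every $x$ by \cite{CE}, followed by a lower bound of $\int \log|\det Df|_F|\,d\mu$ by $\dim(F)\log\alpha$ coming from $m(Df^{n}|_{F(x)})\geq C^{-1}\alpha^{n}$, with part (b) obtained by applying part (a) to $f^{-1}$. The only (harmless) variation is that you get the integral bound directly from the determinant-of-iterates estimate together with invariance of $\mu$, whereas the paper passes through the Lyapunov exponents along $F$ via Oseledets' theorem; both computations are valid and yield the same conclusion.
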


We will prove Theorem \ref{PartialHyper-SRB-like} in Section \ref{sectionFinal}. As a consequence of Theorem \ref{PartialHyper-SRB-like}, we can strengthen Theorem \ref{PartialHyper-MainTheorem} as follows.

\begin{Thm}\label{Strong-PartialHyper-MainTheorem}

If $f \in \mbox{Diff}^1(M)$ is partially hyperbolic then there is a real number $t>0$ and a neighborhood $\cal U$ of $f$ such that the topological entropy of each $g\in \cal U$ is larger or equal to $t$.

\end{Thm}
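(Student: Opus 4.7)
The plan is to combine the quantitative lower bound provided by Theorem \ref{Theorem4} with the well-known $C^1$-robustness of partial hyperbolicity. Since Theorem \ref{Theorem4} bounds the topological entropy from below by an explicit expression, namely $\dim(F)\log\alpha$ or $\dim(E)\log\alpha$, in terms of the constants of the partial hyperbolicity and of the dimensions of the invariant sub-bundles, it suffices to show that these quantities can be controlled uniformly in a $C^1$-neighborhood of $f$.

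First I would invoke the classical robustness result for partial hyperbolicity: if $f$ satisfies Definition \ref{DefinitionPartialHyperbolicity} with constants $C$, $\sigma$ and $\alpha$, then for any fixed $\alpha_0 \in (1,\alpha)$ and $\sigma_0 \in (1,\sigma)$ there exists a $C^1$-neighborhood $\mathcal{U}$ of $f$ such that every $g\in\mathcal{U}$ admits a $Dg$-invariant continuous splitting $TM = E_g \oplus F_g$ which is $\sigma_0$-dominated and satisfies the uniform contraction/expansion estimate (\ref{eqn100a}) (resp.\ (\ref{eqn100b})) with $\alpha$ replaced by $\alpha_0$ and some uniform constant $C_0$. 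Moreover, since the sub-bundles $E_g$ and $F_g$ vary continuously with $g$ in the $C^0$ topology (by the cone-field characterization of dominated splittings, see Remark \ref{Rem001} and \cite{BDV}), and the manifold is connected, the dimensions $\dim(E_g)$ and $\dim(F_g)$ coincide with $\dim(E)$ and $\dim(F)$ throughout $\mathcal{U}$, provided $\mathcal{U}$ is sufficiently small.

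Next, I would apply Theorem \ref{Theorem4} to each $g\in\mathcal{U}$ with the uniform constants above. If $f$ falls in case (a) of Definition \ref{DefinitionPartialHyperbolicity} (i.e.\ $E$ uniformly contracting), then so does every $g\in\mathcal{U}$, and part (a) of Theorem \ref{Theorem4} together with the variational principle yields
\[
h_{\topo}(g) \;\geq\; \sup_{\mu\in p\omega(x,g^{-1})} h_\mu(g) \;\geq\; \dim(F)\log\alpha_0,
\]
for Lebesgue-a.e.\ $x\in M$. Symmetrically, if $f$ falls in case (b) (i.e.\ $F$ uniformly expanding), part (b) of Theorem \ref{Theorem4} gives $h_{\topo}(g) \geq \dim(E)\log\alpha_0$. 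In either case, setting
\[
t \;:=\; \min\bigl\{\dim(E),\dim(F)\bigr\}\cdot \log \alpha_0 \;>\; 0
\]
provides the desired uniform lower bound, completing the proof.

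The main (and essentially only) technical point is the robustness statement itself: one must verify that the constants $\alpha_0$ and the dimensions of the sub-bundles can be preserved uniformly on the neighborhood $\mathcal{U}$. This is standard for uniform (global) dominated splittings and partial hyperbolicity, and follows by a cone-field argument combined with the fact that the quantities $\|Dg^N|_{E_g}\|$, $m(Dg^N|_{F_g})$, etc., depend continuously on $g$ in the $C^1$ topology for each fixed iterate $N$, so that a finite-$N$ version of the hyperbolicity inequality (which is equivalent to the original one after Gourmelon's adapted metric, see Remark \ref{Rem003}) remains valid on a neighborhood with slightly worsened constants. No further obstacle is expected, since once the uniform constants are in hand, Theorem \ref{Theorem4} delivers the lower bound automatically.
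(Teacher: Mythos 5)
Your proposal is correct and takes essentially the same route as the paper: use the $C^1$-openness of partial hyperbolicity to obtain a neighborhood $\mathcal{U}$ on which every $g$ has a continuation of the splitting with slightly degraded uniform constant ($\alpha_0$, the paper's $\alpha-\epsilon$) and the same sub-bundle dimensions, then apply Theorem \ref{Theorem4} together with $h_{\topo}(g)\geq h_\mu(g)$ to get the uniform lower bound. The only slip is that you swapped the pairing of cases (a)/(b) of Theorem \ref{Theorem4} with the contracting/expanding hypotheses (case (a) is $F$ uniformly expanding and uses $p\omega(x,g)$ with bound $\dim(F)\log\alpha_0$; case (b) is $E$ uniformly contracting and uses $p\omega(x,g^{-1})$ with bound $\dim(E)\log\alpha_0$), but since you finally take $t=\min\{\dim(E),\dim(F)\}\log\alpha_0$ the conclusion is unaffected.
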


{\bf Proof.} We just consider the case (a) in Theorem \ref{PartialHyper-SRB-like} and another case is similar. Take $\epsilon>0$ such that $\alpha-\epsilon>1.$ Since partial hyperbolicity is an open property, then there is a neighborhood $\cal U$ of $f$ such that   each $g\in \cal U$ satisfies that: there is a dominated splitting $TM=E_g\oplus F_g$ (called continuation of $E_f\oplus F_f=E\oplus F$)   such that $ {\|Dg^{-n}|_{F_g(x)}\|} \leq C (\alpha-\epsilon)^{-n}, \forall x\in M,\,\, n\geq 1.$ By  Theorem \ref{PartialHyper-SRB-like}, the topological entropy of each $g\in \cal U$ is larger or equal to $ \mbox{\em dim}(F_g)\log (\alpha-\epsilon)= \mbox{\em dim}(F)\log (\alpha-\epsilon).$ Take $t:=\mbox{\em dim}(F)\log (\alpha-\epsilon)$ and we complete the proof. \qed

 \section{Proof  of Theorem  \ref{Theorem1} as  a corollary of  Theorem \ref{Theorem2}.} \label{sect-3}

 To start the proofs, we will first  show that Theorem \ref{Dom-MainTheorem0} is indeed a corollary of Theorem \ref{Dom-MainTheorem}. Recall Definition \ref{DefinitionLambdaEssential}   and consider inequalities (\ref{equation41})   and   (\ref{equation42})     of Theorem   \ref{Dom-MainTheorem}. If we prove  that at least one of the exponents $\lambda _{ess}^{TM, \ f}$,  $\lambda_{ess}^{TM, \ f^{-1}}$ is not negative, then at least one of the  inequalities (\ref{equation41})   and   (\ref{equation42}) will hold. So, from Theorem \ref{Theorem2} we will deduce  that the entropy of $f$ is positive. This latter is the route of the proof of Theorem \ref{Theorem1} as a corollary of Theorem \ref{Theorem2}. Later, we will prove Theorem \ref{Theorem2} independently.

 \begin{Lem}
 \label{lemmaAddedByEleonora}

 Let $f \in \mbox{Diff}_{DS}^1(M)$. If the Lebesgue measure is $\delta$-recurrent for some $0 < \delta < 1$, then  either $\lambda _{ess}^{TM, \ f} \geq 0$ or  $\lambda_{ess}^{TM, \ f^{-1}} \geq 0$.

 \end{Lem}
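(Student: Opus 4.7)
The plan is to argue by contradiction. Assuming both $\lambda_{ess}^{TM,f} < 0$ and $\lambda_{ess}^{TM,f^{-1}} < 0$, I will construct a measurable set $B\subset M$ with $\Leb(B) > 1-\delta$ that is \emph{not} recurrent, directly contradicting the hypothesis. It is worth noticing that the argument will not actually invoke the dominated splitting: the lemma is in fact valid for an arbitrary $C^1$ diffeomorphism, the splitting being needed only when chaining this lemma with Theorem \ref{Theorem2}.

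Under the contradictory hypothesis, I would fix $\eta>0$ with $\lambda_{ess}^{TM,f} < -\eta$ and $\lambda_{ess}^{TM,f^{-1}} < -\eta$. By the definition of essential supremum and of $\lambda^{TM,f^{\pm 1}}(x)$, for Lebesgue-almost every $x$ both defining $\limsup$'s lie strictly below $-\eta$, and therefore the increasing family
\[
A_N^{\pm} := \Big\{ x \in M \ :\ |\mbox{det}(Df^{\pm n}_x)| < e^{-n\eta/2} \text{ for every } n \geq N \Big\}
\]
exhausts $M$ modulo Lebesgue-null sets. Choosing $N$ large enough, the set $B := A_N^+ \cap A_N^-$ will satisfy $\Leb(B) > 1-\delta$.

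The pivotal step is then the chain-rule identity $|\mbox{det}(Df^n_x)| \cdot |\mbox{det}(Df^{-n}_{f^n(x)})| = 1$, valid for every $x$ and every $n$. If there existed $x \in B$ and $n \geq N$ with $y := f^n(x) \in B$, then $x \in A_N^+$ would force $|\mbox{det}(Df^n_x)| < e^{-n\eta/2}$, while $y \in A_N^-$ would demand $|\mbox{det}(Df^{-n}_y)| < e^{-n\eta/2}$; but the identity gives $|\mbox{det}(Df^{-n}_y)| = 1/|\mbox{det}(Df^n_x)| > e^{n\eta/2}$, a contradiction. Consequently $f^n(B) \cap B = \emptyset$ for every $n \geq N$, so $B$ fails to be recurrent, contradicting the $\delta$-recurrence of Lebesgue.

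I do not foresee any serious obstacle. The only mildly delicate bookkeeping is the standard conversion of the pointwise $\limsup$ hypothesis into a uniform exponential bound on a set of nearly-full Lebesgue measure, achieved by the increasing sequence $A_N^{\pm}$. The most illuminating point---and the least superficially obvious one---is that the negativity of \emph{both} essential $\Lambda$-exponents is geometrically incompatible with a single point returning to a near-full-measure set, precisely because the forward and backward Jacobians at paired points $x$ and $f^n(x)$ are reciprocals of one another.
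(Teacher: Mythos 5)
Your proof is correct, and it reaches the same contradiction as the paper but through a different decisive mechanism. The paper's proof builds, exactly as you do, an increasing exhaustion $C_N=\{x:\ N(x)\le N\}$ of points where both Jacobians decay uniformly ($|\det Df^{\pm n}_x|<e^{-na}$ for $n\ge N$), and picks $N$ with $\Leb(C_N)>1-\delta$; but the non-return is then obtained measure-theoretically: from the change-of-variables bound $\Leb(f^{\pm n}(A))\le e^{-na}\Leb(A)$ for $A\subset C_N$, applied to $B_1=C_N\cap f^{-n}(C_N)$ and $B_2=C_N\cap f^{n}(C_N)$, one only gets $\Leb\bigl(C_N\cap f^n(C_N)\bigr)=0$, so the paper must finish by deleting the null set $\bigcup_{n\ge N}f^n(C_N)$ from $C_N$ to produce a set $A_N$ with $\Leb(A_N)>1-\delta$ and $f^n(A_N)\cap A_N=\emptyset$ for $n\ge N$. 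Your pointwise reciprocity $|\det Df^n_x|\cdot|\det Df^{-n}_{f^n(x)}|=1$ short-circuits both steps: it yields $f^n(B)\cap B=\emptyset$ for all $n\ge N$ exactly, with no change-of-variables formula and no null-set surgery, so your argument is slightly more elementary while proving the same statement. Your side remark is also accurate for the paper's argument: neither proof uses the dominated splitting, which enters only when the lemma is combined with Theorem \ref{Theorem2}.
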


 \begin{proof}   Assume by contradiction that there exists $a >0$ such that $\lambda ^{TM, \ f}(x) < - a$ and $  \lambda ^{TM, \ f^{-1}}(x) < -a$  for Lebesgue almost all $x \in M$. In other words, the following two inequalities hold simultaneously Leb-a.e. $x \in M$:
 \begin{equation}
 \label{eqn01}
 \limsup_{n \rightarrow + \infty} \frac{1}{n} \log |\mbox{det}Df^n(x) | < -a, \ \ \ \ \ \limsup_{n \rightarrow + \infty} \frac{1}{n} \log |\mbox{det}Df^{-n}(x) | < -a.
 \end{equation}
 Thus, for Lebesgue a.e. $x \in M$ there exists  a (minimum)  natural number $N= N(x)$ such that
 $$|\mbox{det}Df^n(x) | < e^{-na} \mbox{ and } |\mbox{det}Df^{-n}(x) | < e^{-na}  \ \ \forall \ n \geq N(x).$$ For each $N \in \mathbb{N}$ construct $C_N :=\{x \in M: \ \ N(x) \leq N\}$.   Since $C_N \subseteq C_{N+1}$ and $\Leb\Big(\bigcup_{N= 1}^{+ \infty} C_N\Big) = 1$ we have
 $\lim_{N\rightarrow + \infty} \Leb(C_N) = 1$.
 Fix   $N$ such that $$\Leb (C_N) > 1 - \delta.$$
Fix $n \geq N$. We have $|\mbox{det}(Df^n(x))| \leq \ e^{-n \alpha},   \ |\mbox{det}(Df^{-n}(x))|   \leq   e^{-n \alpha} $  for all $x \in C_N$, from where we deduce the following inequalities  for any measurable set $B$:
 \begin{equation} \label{eqn02} \Leb(C_N \cap f^n(B))\leq e^{- n \alpha} \Leb(B), \ \ \ \ \ \ \ \Leb(C_N \cap f^{-n}(B))  \leq e^{- n \alpha} \Leb(B). \end{equation}
 We put $B_1 := C_N  \cap f^{-n}(C_N)$ and $B_2:= C_N \cap f^n(C_N)$ instead of $B$  in the inequality at left and at right of (\ref{eqn02}) respectively. We obtain:
 $$\Leb(B_2) = \Leb(C_N \cap f^n(B_1)) \leq e^{-n \alpha} \Leb (B_1),$$
 $$\Leb (B_1) = \Leb (C_N \cap f^{-n}(B_2) \leq e^{-n \alpha}\Leb(B_2).$$
 Since $0 < e^{-n \alpha} < 1$, the above inequalities imply $0 =\Leb(B_1)= \Leb(B_2)$.
   So, we have proved that $$\Leb(C_N \cap f^n(C_N)) = 0 \  \ \forall \ n \geq N.$$
  Finally, construct
   $\displaystyle A_N = C_N \setminus \Big(\bigcup_{n= N}^{+ \infty} f^n(C_N)\Big)              $
  to conclude that
   $$\Leb (A_N) = \Leb(C_N) > 1 - \delta, \ \ \
   f^n(A_N) \cap A_N = \emptyset \ \ \forall \ n \geq N, $$
   which contradicts the   $\delta$-recurrence of the Lebesgue measure.  \end{proof}

 After Lemma \ref{lemmaAddedByEleonora},  to end the proof of Theorem \ref{Theorem1} it is now  enough to prove independently Theorem \ref{Dom-MainTheorem}.


\section{Independent proof   of Theorem  \ref{Theorem2}.} \label{sect-4}

\noindent {\em Route of the proof of Theorem}  \ref{Theorem2}:  We will use similar ideas to those in Subsection \ref{subsectionStatementSmooth} for $C^1$ diffeomorphisms with dominated splitting that preserve a smooth measure. But    now, smooth invariant measures may not exist. So, we do not have from the very beginning  an adequate invariant measure that satisfies simultaneously inequalities (\ref{eqnSunTian}) and (\ref{eqnSunTian2}).
Anyway,   we will   construct two  or more invariant   probabilities, some satisfying inequality (\ref{eqnSunTian}) and the other ones satisfying inequality (\ref{eqnSunTian2}). Finally we will prove that at least one of those measures has positive entropy.

The construction of such adequate probabilities will be based on the theory of SRB-like measures for $C^1$ maps  introduced in \cite{CE}. We will apply a result in \cite{CCE}, which provides a Pesin-like formula for the entropy  to all the SRB-like measures of any $f \in \mbox{Diff}^1_{DS}(M)$. This formula  was  previously proved in \cite{SunTian}   for the particular case of   $f$ preserving a smooth measure.

\vspace{.3cm}

  Recall Definition \ref{definitionpomega(x)} of the limit set $p\omega(x, f)$ in the space ${\mathcal P}$ of probabilities of the future orbit of $x$ by $f$. Fix a metric $\mbox{dist}$ in ${\mathcal P}$ that endows the weak$^*$-topology. Let us recall the definition of the SRB-like measures (taken  from \cite{CE}):

 \begin{Def} {\bf (SRB-like measures)} \em
  \label{definitionSRB-like}

 A probability measure $\mu\in \mathcal{P}_f$ is \em SRB-like \em   (or  observable or  pseudo-physical) if, for any $\epsilon >0$,  the set $$A_\varepsilon(\mu)=\{x\in M\colon \ \mbox{dist}(p\omega (x, f),\mu) <\varepsilon \}$$ has positive Lebesgue measure. The set $A_\varepsilon(\mu)$ is called basin of $\varepsilon-$attraction of $\mu.$

We denote by $\mathcal{O}_{f}$ the set of all the SRB-like measures for $f$.

\end{Def}


%
%
%
%
%
%
%
%
%
%
%
%
%
%
%
%

  We will use the following previous theorems from \cite{CE}, \cite{CCE} and \cite{SunTian}:

\begin{Thm}\label{SRB-like} {\bf (C.-Enrich \cite{CE})} \label{TheoremCE}

 For any continuous map $f : M \mapsto M$ the set $\mathcal{O}_{f} $ of   SRB-like measures is nonempty, weak$^*$-compact, and  contains $p\omega(x,f)  $ for a.e. $x \in M$.

\end{Thm}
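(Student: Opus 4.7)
The plan is to establish the three assertions in the order: weak$^*$-compactness, then the a.e.\ inclusion $p\omega(x,f)\subset\mathcal{O}_f$, with nonemptiness as a trivial consequence of the second assertion combined with the fact that $p\omega(x,f)\neq\emptyset$ for every $x$.

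For weak$^*$-compactness, since $\mathcal{P}$ is weak$^*$-compact it suffices to check that $\mathcal{O}_f$ is weak$^*$-closed. I would take a convergent sequence $\mu_n\to\mu$ with $\mu_n\in\mathcal{O}_f$, fix $\varepsilon>0$, and choose $n$ large enough that $\mbox{dist}(\mu_n,\mu)<\varepsilon/2$. A direct triangle-inequality check yields $A_{\varepsilon/2}(\mu_n)\subset A_{\varepsilon}(\mu)$: if $x\in A_{\varepsilon/2}(\mu_n)$, pick $\rho\in p\omega(x,f)$ with $\mbox{dist}(\rho,\mu_n)<\varepsilon/2$, so that $\mbox{dist}(\rho,\mu)<\varepsilon$ and hence $\mbox{dist}(p\omega(x,f),\mu)<\varepsilon$. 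Since $A_{\varepsilon/2}(\mu_n)$ has positive Lebesgue measure, so does $A_{\varepsilon}(\mu)$; as $\varepsilon$ was arbitrary, $\mu\in\mathcal{O}_f$.

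For the a.e.\ inclusion $p\omega(x,f)\subset\mathcal{O}_f$, I would use a covering argument on the open set $U:=\mathcal{P}\setminus\mathcal{O}_f$. For each $\nu\in U$ choose $\varepsilon(\nu)>0$ such that $\mbox{Leb}(A_{\varepsilon(\nu)}(\nu))=0$; the balls $B(\nu,\varepsilon(\nu)/2)$ then cover $U$. Since $M$ is compact metric, $\mathcal{P}$ with the weak$^*$-topology is compact metric as well, hence second countable and Lindel\"of, so this cover admits a countable subcover $\{B(\nu_i,\varepsilon_i/2)\}_{i\in\mathbb{N}}$. Set $N:=\bigcup_{i\in\mathbb{N}}A_{\varepsilon_i}(\nu_i)$, a countable union of null sets, hence itself Lebesgue-null. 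For any $x\notin N$, suppose for contradiction some $\mu\in p\omega(x,f)$ fails to be SRB-like; then $\mu\in B(\nu_i,\varepsilon_i/2)$ for some $i$, so $\mbox{dist}(p\omega(x,f),\nu_i)\le\mbox{dist}(\mu,\nu_i)<\varepsilon_i/2<\varepsilon_i$, forcing $x\in A_{\varepsilon_i}(\nu_i)\subset N$, a contradiction.

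Nonemptiness is then immediate: weak$^*$-compactness of $\mathcal{P}$ ensures that the Ces\`aro averages $\frac{1}{n}\sum_{i=0}^{n-1}\delta_{f^i(x)}$ always have accumulation points, so $p\omega(x,f)\neq\emptyset$ for every $x\in M$; picking any $x$ outside the null set $N$ furnishes $\emptyset\neq p\omega(x,f)\subset\mathcal{O}_f$. The main technical subtlety is the extraction of a countable subcover, which is where compact metrizability of $\mathcal{P}$ in the weak$^*$-topology is essential; without it the exceptional set $N$ could fail to be Lebesgue-null and the argument would collapse.
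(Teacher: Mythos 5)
Your argument is correct: the paper states this theorem without proof, citing \cite{CE}, and your three steps (sequential closedness of $\mathcal{O}_f$ inside the compact metrizable space $\mathcal{P}$ via the inclusion $A_{\varepsilon/2}(\mu_n)\subset A_{\varepsilon}(\mu)$, a Lindel\"of/countable-subcover argument on the open complement producing a Lebesgue-null exceptional set $N$, and nonemptiness from $p\omega(x,f)\neq\emptyset$ for any $x\notin N$) reconstruct essentially the argument of that reference. The only point deserving a remark is the measurability of the basins $A_{\varepsilon}(\nu)$, which is harmless here: your set $N$ is a countable union of sets of zero Lebesgue (outer) measure, and countable subadditivity of outer measure already gives that $N$ is null, so the a.e.\ inclusion follows regardless.
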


\begin{Thm}\label{Dom-EntropyFormula} \label{theoremCCE}
{\bf (Pesin-like formula, C.-Cerminara-Enrich \cite{CCE})}

 If $f \in \mbox{Diff}^{1} (M)$ has a  dominated splitting $TM = E \oplus F$, and if $ \mu \in {\mathcal O}_f$ \em (i.e. $\mu $ is SRB-like), \em then
\em \begin{equation} \label{eqnDom-EntropyFormula0}
 h_\mu(f)\geq \int \sum_{i=1}^{\dim (F)}\chi_i(x)d\mu = \int \log |\det Df|_F| \, d \mu,\end{equation} \em where  \em $\chi_1 \geq\chi_2 \cdots \geq \chi_{\dim(M)} $ \em denote the Lyapunov exponents defined $\mu$-a.e.
 \end{Thm}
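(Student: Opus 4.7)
The statement combines two assertions which I would handle separately: the equality on the right, and the substantive inequality on the left.

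The equality is a routine consequence of Oseledec's multiplicative ergodic theorem together with the dominated splitting. Remark \ref{Rem002} gives $\|Df^k|_{E(x)}\|/m(Df^k|_{F(x)}) \leq \sigma^{-k}$ for every $k \geq 1$, so every Lyapunov exponent of the invariant cocycle $Df|_F$ exceeds every Lyapunov exponent of $Df|_E$ by at least $\log\sigma$. Hence the top $\dim(F)$ global Lyapunov exponents $\chi_1,\ldots,\chi_{\dim(F)}$ coincide with the exponents of $Df|_F$. Since the sum of Lyapunov exponents of any invariant linear cocycle equals the Birkhoff time-average of the log of the absolute value of its determinant, integrating against the $f$-invariant $\mu$ yields $\int \sum_{i=1}^{\dim(F)} \chi_i \, d\mu = \int \log|\det Df|_F| \, d\mu$.

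For the main inequality $h_\mu(f) \geq \int \log|\det Df|_F| \, d\mu$, my plan is to combine the SRB-like hypothesis with Katok's characterization of metric entropy via $(n,\delta)$-separated sets. The first ingredient is a locally $f$-invariant plaque family $\{W^F(x)\}_{x \in M}$ of $C^1$ disks tangent to $F$, obtained from the Hirsch--Pugh--Shub theorem applied to the dominated splitting; these plaques have uniform geometry, and for each fixed $n$ the $\dim(F)$-volume of $f^n(W^F(x))$ is comparable to $|\det(Df^n|_{F(x)})|\cdot \mbox{Vol}(W^F(x))$, with a distortion error uniform in $x$ that vanishes as the plaque size shrinks (thanks to uniform continuity of $Df$). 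The second ingredient is the SRB-like hypothesis: $\mbox{Leb}(A_\epsilon(\mu))>0$ for every $\epsilon>0$, so by Theorem \ref{TheoremCE} together with a Fubini-style disintegration of Lebesgue along $\{W^F\}$, a typical plaque meets $A_\epsilon(\mu)$ in a set of positive $\dim(F)$-dimensional Lebesgue measure. For such a basin point $x$ the empirical averages along its orbit accumulate on measures $\epsilon$-close to $\mu$, and by continuity of $\log|\det Df|_F|$,
$$\limsup_{n\to\infty} \frac{1}{n} \log|\det(Df^n|_{F(x)})| \;\geq\; \int \log|\det Df|_F|\, d\mu - \eta(\epsilon),$$
with $\eta(\epsilon)\to 0$. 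A maximal $\delta$-packing of the disk $f^n(W^F(x))$ pulls back under $f^{-n}$ to an $(n,\delta)$-separated set of cardinality at least $c|\det(Df^n|_{F(x)})|/\delta^{\dim(F)}$ lying in a small neighborhood of $x$. Assembling this construction over a positive-measure collection of plaques produces an $(n,\delta)$-separated set whose dynamical $\delta$-balls cover a subset of $M$ of $\mu$-measure close to one; Katok's formula then delivers $h_\mu(f)\geq \int \log|\det Df|_F|\,d\mu$ after letting $\delta,\epsilon\to 0$.

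The main obstacle, I expect, is the last step: transferring from a Lebesgue-positive basin to a $\mu$-large covering set in Katok's formula. In the classical Pesin--Ledrappier--Young theory (valid for $C^{1+\alpha}$) this is handled via absolute continuity of the unstable foliation, which fails in the $C^1$-dominated setting where $\{W^F\}$ need not be absolutely continuous. I would therefore expect the technical heart of \cite{CCE} to be a substitute for this absolute continuity, extracting from the defining basin-property of SRB-likeness enough compatibility between the $F$-plaque Lebesgue volumes and the $\mu$-conditional measures to complete the covering-to-entropy transfer without invoking H\"older regularity.
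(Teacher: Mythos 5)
First, a point of comparison: the paper does not prove this statement at all. Theorem \ref{theoremCCE} is quoted as an external result from \cite{CCE} (just as Theorem \ref{TheoremCE} is quoted from \cite{CE}) and is used as a black box in the proofs of Theorems \ref{Theorem2} and \ref{Theorem4}. So there is no internal argument of the paper to measure your proposal against; the only question is whether your sketch would itself amount to a proof of the cited result.

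As it stands it would not. The right-hand equality is fine: domination forces every Lyapunov exponent along $F$ to exceed every exponent along $E$ by at least $\log\sigma$, so the top $\dim(F)$ exponents are those of the cocycle $Df|_F$, and Oseledets plus Birkhoff gives $\int\sum_{i=1}^{\dim(F)}\chi_i\,d\mu=\int\log|\det Df|_F|\,d\mu$. The gap is in the central inequality, at exactly the step you flag yourself. Two concrete problems. (i) The distortion claim is not available in the $C^1$ setting: the comparability of $\mathrm{Vol}\big(f^n(W^F(x))\big)$ with $|\det (Df^n|_{F(x)})|\cdot\mathrm{Vol}(W^F(x))$, with an error ``uniform in $x$ that vanishes as the plaque size shrinks,'' degrades with $n$; without H\"older control of $Df$ there is no bounded distortion along plaques, and one must work with the integrated Jacobian $\int_{W^F}|\det (Df^n|_F)|\,d\Leb$ rather than its pointwise value at $x$ (this is how the related topological-entropy argument of \cite{SSV} is organized). (ii) More seriously, Katok's formula lower-bounds $h_\mu(f)$ only through $(n,\delta)$-separated or spanning sets attached to sets of $\mu$-measure uniformly close to $1$ (and for non-ergodic $\mu$ one needs an ergodic decomposition or a suitable version of the formula), whereas your separated sets live on an $F$-plaque through a Lebesgue-density point of the basin $A_\varepsilon(\mu)$. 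SRB-likeness gives $\Leb(A_\varepsilon(\mu))>0$ but says nothing about $\mu(A_\varepsilon(\mu))$, which may well be $0$; bridging Lebesgue-positivity of the basin to a $\mu$-relevant entropy estimate is precisely the content of the theorem, and your proposal explicitly defers it to an unspecified ``substitute for absolute continuity.'' So what you have is an honest plan whose decisive step is missing. If you want to carry it out, the mechanism used in this line of work (\cite{SunTian} for the smooth-invariant case, \cite{CCE} for SRB-like measures) replaces pointwise distortion and absolute continuity by integrated Jacobian estimates over plaques and by controlling the Lebesgue measure of dynamical balls in terms of the $F$-Jacobian, which is genuinely different from a Katok-type separated-set count on unstable plaques.
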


 \vspace{.3cm}

Now, we are ready to prove Theorem \ref{Theorem2}, and hence, also end  the proof of Theorem \ref{Theorem1}.

\vspace{.2cm}

\noindent {\em Proof of Theorem  } \ref{Theorem2}.
We will divide the proof into two parts.
In the first part we will prove that inequality (\ref{equation43}) implies $h_{\topo}(f) >0$.
In the second part we will prove that inequality (\ref{equation41}) also implies $h_{\topo}(f) >0$.
These two parts are enough to prove completely Theorem \ref{Theorem2} because they also hold for $f^{-1}, E$ and $ F$ in the roles of $f, F$ and $ E$  respectively.

\begin{proof}
 {\em of the 1st. part:}

 Assume inequality (\ref{equation43}). By Definition \ref{DefinitionLambdaEssential} the following set $B$ has positive Lebesgue measure:
 $$B:= \{x \in M: \limsup_{n \rightarrow + \infty} \frac{1}{n} \log  |\det Df^n_x|_{F(x)}| > 0\}.$$
 From Theorem \ref{TheoremCE}: $$p\omega(x,f) \subset {\mathcal O}_f \ \ \mbox{Leb.- a.e.} x \in M.$$   Choose and fix  a point $x \in B$ such that $p\omega(x,f) \subset {\mathcal O}_f$, and fix a sequence $n_j \rightarrow + \infty$ such that
 \begin{equation}\label{eqn105}\lim_{j \rightarrow + \infty} \frac{1}{n_j} \log  |\det Df^{n_j}_x|_{F(x)}| = r >0.\end{equation}
 Choose a subsequence of $\{n_j\}_j$ - which for simplicity we still denote by $\{n_j\}_j$ -  such that the following limit exists in the space ${\mathcal P} $ of probabilities endowed with the weak$^*$-topology (we call this limit $\mu$):
 \begin{equation}\label{eqn106}\lim_{j \rightarrow + \infty} \frac{1}{n_j} \sum_{i=0}^{n_j - 1} \delta_{f^i(x)} = \mu \in {\mathcal P}.\end{equation}
  After Definition \ref{definitionpomega(x)}, $\mu \in p\omega(x,f) \subset {\mathcal O}_f$. So, applying Theorem \ref{theoremCCE}:
  \begin{equation}\label{eqn107}h_{\mu}(f) \geq \int \psi \, d \mu, \mbox{ where } \psi:= \log |\det Df|_F|.\end{equation}
   By the definition of the weak$^*$ topology in ${\mathcal P}$ (since $\psi$ is a continuous real function), and   from equalities (\ref{eqn106}) and   (\ref{eqn105}), we deduce:
   \begin{equation}\label{eqn108}\int \psi \, d \mu = \lim _{j \rightarrow + \infty} \frac{1}{n_j} \sum_{i= 0}^{n_j - 1} \psi (f^i(x)) = \lim _{j \rightarrow + \infty} \frac{1}{n_j}   \sum_{i= 0}^{n_j - 1}  \log |\det Df_  {f^i(x)}|_{F(f^i(x))}|= $$ $$ \lim_{j \rightarrow + \infty}\frac{1}{n_j} \log |\det Df^{n_j}_x|_{F(x)} = r >0.  \end{equation}
 Joining inequalities (\ref{eqn107}) and (\ref{eqn108}) we conclude that $h _{\mu}(f) >0$, as wanted.
 \end{proof}

\begin{proof}
{\em of the 2nd. part:}  Assume that inequality (\ref{equation41}) holds. Arguing as in the first part we find a point $x \in M$, a sequence $n_j \rightarrow + \infty$ and an SRB-like   measure $\mu \in {\mathcal O}_f$ such that

\begin{equation}\label{eqn110}\lim_{j \rightarrow + \infty} \frac{1}{n_j} \log  |\det Df^{n_j}_x | = s >-\mbox{dim}(E) \log \sigma,\end{equation}
  \begin{equation}\label{eqn111}h_{\mu}(f) \geq \int \log |\det Df_F|  \, d \mu, \end{equation}
   \begin{equation}\label{eqn112}\int |\det Df  | \, d \mu = \lim _{j \rightarrow + \infty} \frac{1}{n_j} \sum_{i= 0}^{n_j - 1} |\det Df_{f^i(x)} |   =  \lim_{j \rightarrow + \infty}\frac{1}{n_j} \log |\det Df^{n_j}_x |  = s.  \end{equation}
  Since $E \oplus F = TM$ is a $Df$-invariant splitting and $\mu$ is an $f$-invariant measure, applying Oseledets Theorem we obtain:
  $$ \int \log |\det Df| \, d \mu = \int \sum_{k=1}^{\dim M} \chi_k  \, d \mu  =  \int \sum_{k=1}^{\dim  F} \chi_k  \, d \mu  + \int \sum_{k=\dim F + 1}^{\dim M} \chi_k  \, d \mu  = $$ $$\int \log|\det Df |_F | \, d \mu + \int \log|\det Df |_E | \, d \mu.$$
  Thus,
  \begin{equation}\label{eqn114}\int \log|\det Df |_F | \, d \mu = \int \log |\det Df| \, d \mu - \int \log|\det Df |_E | \, d \mu\end{equation}
   Besides, from standard inequalities of the linear algebra, and applying the definition of dominated splitting: $$\log |\det Df |_E| \leq \mbox{dim} (E) \log \|Df |_E \| \leq \mbox{dim}(E)\log \big(\sigma^{-1}  m(Df|_F) \big) \leq $$
     \begin{equation}\label{eqn115} -  \mbox{dim} (E) \,  \log \sigma  + \frac{\mbox{dim} (E)}{\mbox{dim} (F)} \, \log |\det Df |_F| . \end{equation}
   Joining  equality (\ref{eqn114}) with inequality (\ref{eqn115}):
   \begin{equation}\label{eqn116}  \Big(1 + \frac{\mbox{dim} (E)}{\mbox{dim} (F)}\Big) \, \int \log|\det Df |_F | \, d \mu \geq \int \log |\det Df| \, d \mu + {\mbox{dim} (E)} \log \sigma.\end{equation}
Finally, from inequalities (\ref{eqn110}), (\ref{eqn111}), (\ref{eqn112}) and (\ref{eqn116}) we conclude:
$$\Big(1 + \frac{\mbox{dim}(E)}{\mbox{dim} (F)}\Big) \,h_{\mu}(f) \geq \Big(1 + \frac{\mbox{dim}(E)}{\mbox{dim} (F)}\Big) \, \int \log|\det Df |_F | \geq \log |\det Df| \, d \mu + {\mbox{dim} (E)} \log \sigma   $$ $$ = s + \mbox{dim}(E) \log \sigma >  -\mbox{dim}(E) \log \sigma + \mbox{dim}(E) \log \sigma = 0.$$
We have proved that $h_{\mu}(f) >0$, as wanted.\end{proof}

\section{Proof  of Theorem \ref{Theorem3}.} \label{sectionProofSmooth}

The purpose of this section is to prove Theorem \ref{Smooth-Measure-Positive-Entropy}, completing all the proofs of   the results in the smooth invariant measure case (see Subsection \ref{subsectionStatementSmooth}). Recall Definition \ref{DefinitionAlmostDom} of almost-dominated splitting along an orbit and consider the following   result of \cite{SunTian}:

\begin{Thm}\label{PesFormula-Thm:positive}\noindent{\bf (Sun-T. \cite{SunTian})} Let $f     \in \mbox{Diff}^1(M)$  preserve  a smooth measure $\mu$ that has an almost dominated splitting. Then \em
\begin{equation}
\label{eqn120}
h_{\mu}(f)\geq\int  \sum_{i=1}^{\dim\,F}\chi_i(x)d\mu,\end{equation} \em where \em
 $\chi_1(x)\geq\chi_2(x)\geq\cdots\geq\chi_{\dim\,M}(x)$  \em are
the Lyapunov exponents at $ \mu$ a.e. $x.$
\end{Thm}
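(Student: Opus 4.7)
The plan is to establish the Pesin-type lower bound by constructing a measurable partition subordinate to the dominating direction $F$, and then exploiting the smoothness of $\mu$ to perform an entropy calculation in the spirit of the classical Margulis--Ruelle--Pesin argument, but adapted to the $C^{1}$ almost-dominated (non-uniform, non-continuous) setting. First I would stratify $M$ by the Pesin-like blocks $\Lambda_{k}=\{x:N(x)\leq k\}$ with $\mu(\bigcup_{k}\Lambda_{k})=1$; on each $\Lambda_{k}$ the almost-domination inequality becomes genuinely uniform, so we have $\sigma$-dominated behaviour along orbits that visit $\Lambda_{k}$.

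Next, on each block I would invoke a Hadamard--Perron-type graph-transform construction to obtain a family of local ``$F$-manifolds'' $W^{F}_{\mathrm{loc}}(x)$ tangent to $F(x)$, of uniform size on $\Lambda_{k}$, with the forward-invariance property $f(W^{F}_{\mathrm{loc}}(x))\supset W^{F}_{\mathrm{loc}}(f(x))$ up to rescaling. Domination (rather than expansion) is exactly what is needed for the graph-transform to converge in the $C^{1}$ category along orbits for which the constant $N(x)$ is controlled; this is the non-uniform analogue of the standard dominated invariant-manifold theorem and avoids any Hölder regularity assumption. Then I would build a measurable partition $\xi$ that is increasing (i.e.\ $f^{-1}\xi \preceq \xi$), whose atoms $\xi(x)$ are contained in $W^{F}_{\mathrm{loc}}(x)$, whose join $\bigvee_{n\geq 0}f^{-n}\xi$ separates points, and such that $H_{\mu}(\xi\mid f\xi)<\infty$. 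This is the Ledrappier--Young style partition, carried out separately on each $\Lambda_{k}$ and then glued.

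With such a partition in hand, the Rokhlin entropy formula gives
\begin{equation*}
h_{\mu}(f)\geq h_{\mu}(f,\xi)=H_{\mu}(\xi\mid f\xi)=\int -\log \mu^{f\xi}_{x}(\xi(x))\,d\mu(x),
\end{equation*}
where $\mu^{f\xi}_{x}$ denotes the conditional measure on the atom $f\xi(x)$. The crucial step is to identify the right-hand side with $\int\log|\det Df|_{F}|\,d\mu$. Because $\mu$ is absolutely continuous with respect to Lebesgue on $M$, a Fubini-type disintegration along the local $F$-foliation shows that the conditional measures $\mu^{\xi}_{x}$ on the leaves $\xi(x)$ are absolutely continuous with respect to the induced Riemannian volume on $W^{F}_{\mathrm{loc}}(x)$. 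This bypasses the Pesin absolute-continuity theorem (which would normally require $C^{1+\alpha}$) and is where the smoothness hypothesis on $\mu$ enters decisively. Tracking how $f$ transports these conditional densities, the Radon--Nikodym derivative of $f_{*}\mu^{\xi}_{x}$ relative to $\mu^{\xi}_{f(x)}$ is governed by the leafwise Jacobian $|\det Df|_{F(x)}|$. Taking $\log$, integrating, and applying the multiplicative ergodic theorem to identify $\int\log|\det Df|_{F}|\,d\mu=\int\sum_{i=1}^{\dim F}\chi_{i}(x)\,d\mu(x)$ yields the desired inequality.

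The main obstacle I anticipate is the first technical step: producing the local $F$-manifolds and then a genuinely measurable partition subordinate to them in the $C^{1}$ almost-dominated setting. Standard invariant-manifold results assume either a continuous global dominated splitting or a $C^{1+\alpha}$ Pesin structure; here one has neither, only an $f$-invariant measurable field of $N(x)$-dominated splittings. Making the graph-transform argument quantitative on each block $\Lambda_{k}$ (with constants depending only on $k$), then passing to a limit to obtain a $\mu$-a.e.\ defined measurable lamination whose atoms are large enough to separate points under iteration while small enough to keep $H_{\mu}(\xi\mid f\xi)$ finite, is the delicate part of the argument. Once this is achieved, the disintegration and Jacobian computation are relatively standard.
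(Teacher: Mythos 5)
You have proposed to prove this by the Ledrappier--Young route: local $F$-manifolds from a graph transform, a measurable partition subordinate to them, and an identification of $H_{\mu}(\xi\mid f\xi)$ with $\int\log|\det Df|_{F}|\,d\mu$. Note first that the paper does not prove this statement at all: it is quoted from \cite{SunTian}, and the proof there is a Ma\~n\'e-style, purely measure-theoretic argument (estimating $\mu$-measures of dynamical balls using the continuity of the splitting on the blocks $\Lambda_k$ and the boundedness of the Radon--Nikodym density of $\mu$), precisely in order to avoid any invariant-manifold or absolute-continuity machinery, which is unavailable in the $C^{1}$ setting.

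The two central steps of your sketch have genuine gaps. First, domination is \emph{not} enough to run the graph transform and obtain dynamically defined $F$-manifolds with $f(W^{F}_{\mathrm{loc}}(x))\supset W^{F}_{\mathrm{loc}}(f(x))$: with no expansion along $F$ (and the hypotheses here allow all exponents along $F$ to be zero or negative), Hirsch--Pugh--Shub theory only yields locally invariant, non-unique plaque families tangent to $F$, with no contraction of plaques under $f^{-1}$. Consequently the subordinate-partition machinery collapses: the atoms need not shrink under backward iteration, $\bigvee_{n\ge 0}f^{-n}\xi$ need not separate points, and the identity $H_{\mu}(\xi\mid f\xi)=\int\log|\det Df|_{F}|\,d\mu$ cannot hold in general, since the left-hand side is nonnegative while the right-hand side may be negative (the theorem's bound is claimed, and is nontrivial, even when $\int\sum_{i\le\dim F}\chi_i\,d\mu\le 0$, in which case ``unstable-like'' leaves tangent to $F$ are simply the wrong object). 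Second, the claim that smoothness of $\mu$ plus a ``Fubini-type disintegration'' gives absolute continuity of the conditional measures on the leaves is exactly the absolute-continuity-of-the-lamination problem you say you are bypassing: for a merely measurable lamination this can fail badly (Fubini-nightmare examples show a full-measure set can meet each leaf in a single point, so conditionals of a smooth measure can be atomic), and the known proofs of leafwise absolute continuity require Pesin theory in regularity $C^{1+\alpha}$, which is precisely what is missing here. So the proposal, as written, does not yield the inequality; the workable $C^{1}$ arguments (\cite{SunTian}, and \cite{CCE} for SRB-like measures) take the Ma\~n\'e-type route instead.
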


Note that $\mu$ is also a smooth invariant measure for   $f^{-1}.$ So,   Theorem \ref{PesFormula-Thm:positive} immediately implies \begin{equation}
\label{eqn120b}h_{\mu}(f)=h_{\mu}(f^{-1})\geq-\int  \sum_{i=1+ \dim\,F}^{\dim\,M}\chi_i(x)d\mu.\end{equation}

\vspace{.3cm}

\begin{proof}
{\em  of Theorem } \ref{Smooth-Measure-Positive-Entropy}:

By assumption, there is  a set $B$ with $\mu$ positive measure such that for any point $x\in B $ there exists a $N(x)-$dominated splitting $$T_{orb(x)}M=E_{orb(x)}\oplus F_{orb(x)}$$ such that $$0<\mbox{dim} (F(x))=i_0<\mbox{dim}(M),$$ where $N(\cdot):M\rightarrow
\mathbb{N}$ is an $f$-invariant measurable function and $i_0$ is a fixed integer number.

Let $B_L:=\{x\in B|\,\, N(x)\leq L\}.$ We have $B_L\subseteq B_{L+1}$ and $B=\bigcup_{L\geq 1}B_L.$ So, we can choose $L$ large enough such that $B_L$ has $\mu$ positive measure. Define $$S:=L ! \ \ \ \ \mbox{ and }\ \ \ \  g:=f^S.$$

Notice that $B_L$ is an $f-$invariant set, and besides  $N(x)| S $ for all   $x\in B_L$, Then, for any point $x\in B_L $
\begin{equation}
\label{eqn300}
\frac{\|Dg|_{E(x)}\|}{m(Dg|_{F(x)})}\leq \prod_{j=0}^{ ( {S}/{N(x)} ) -1}
\frac{\|Df^{N(x)}|_{E(f^{jN(x)}(x))}\|}{m(Df^{N(x)}|_{F(f^{jN(x)}(x))})}
\leq
\Big(\frac12 \Big )^{  {S}/{N(x)}}\leq\frac12 \ \ \ \forall \ x\in B_L.\end{equation}

Thus $g$ has a non trivial uniform dominated splitting for all $x \in B_L$. From inequality (\ref{eqn300}) we deduce the following, for all $x \in B_L$:
   \begin{eqnarray}
   \label{eqn230a}
    \limsup_{n\rightarrow +\infty}\frac1n\sum_{i=0}^{n-1}{\log \|Dg|_{E(g^{  i}(x))}\|}&\leq &\limsup_{n\rightarrow +\infty}\frac1n\sum_{i=0}^{n-1}{\log m(Dg|_{F(g^{  i}(x))})}-\log 2 \end{eqnarray}

Define $\nu :=\mu|_{B_L}$. Then $\nu$ is a $g$-invariant smooth measure.
From inequalities (\ref{eqn120}) and (\ref{eqn120b}) of Theorem \ref{PesFormula-Thm:positive} applied to $g$ in the role of $f$, we obtain $$\frac1 {i_0}h_{\nu}(g)\geq\int\chi_{i_0}(x)d\nu = \int \lim_{n \rightarrow + \infty} \frac{1}{n} \log m(Dg^n|_F) \, d\nu \geq $$ $$\int\limsup_{n\rightarrow +\infty}\frac1n\sum_{i=0}^{n-1}{\log m(Dg|_{F(g^{ i}(x))})}d\nu;$$
$$\frac1 { \mbox{dim}(M)-i_0} \cdot h_{\nu}(g)\geq-\int\chi_{i_0 + 1}(x)d\nu = -\int \lim_{n \rightarrow + \infty} \frac{1}{n} \log \|Dg^n)|_E \|\, d\nu\geq $$ $$-\int\liminf_{n\rightarrow +\infty}\frac1n\sum_{i=0}^{n-1}{\log \|Dg|_{E(g^{ i}(x))}\|}d\nu.$$

Taking the sum of the two latter inequalities, and applying inequality (\ref{eqn230a}), we conclude $$\Big(\frac1 {i_0}+\frac1 {\mbox{dim}(M)-i_0}\Big) \cdot h_{\nu}(g)\geq$$ $$\int\limsup_{n\rightarrow +\infty}\frac1n\sum_{i=0}^{n-1}{\log m(Dg|_{F(g^{ i}(x))})}\, d\nu-\int\liminf_{n\rightarrow +\infty}\frac1n\sum_{i=0}^{n-1}{\log \|Dg|_{E(g^{ i}(x))}\|}\, d\nu\geq   $$ $$\int\limsup_{n\rightarrow +\infty}\frac1n\sum_{i=0}^{n-1}{\log m(Dg|_{F(g^{ i}(x))})}\, d\nu-\int\limsup_{n\rightarrow +\infty}\frac1n\sum_{i=0}^{n-1}{\log \|Dg|_{E(g^{ i}(x))}\|}\, d\nu\geq \log 2>0.$$
So $h_{\topo}(f)=\frac1Sh_{\topo}(g)\geq \frac1Sh_{\nu}(g)>0,$
  as wanted.
\end{proof}

\section{ Proof  of Theorem \ref{Theorem4}} \label{sectionFinal}
The purpose of this section is to prove Theorem \ref{PartialHyper-SRB-like}. This Theorem explicits a positive lower bound for the entropy of  partially hyperbolic diffeomorphisms, and characterize a set of invariant measures whose metric entropy are bounded away from zero (see Subsection \ref{subsectionStatementSmooth}).

\vspace{.3cm}

\begin{proof}
{\em of Theorem} \ref{Theorem4}:

 Assertion (a) of Theorem \ref{PartialHyper-SRB-like}  assumes that    $F$ is an expanding sub-bundle. Precisely, if there exists $C>0$ and $\alpha >1$ such that $ {\|Df^{-n}|_{F(x)}\|} \leq C \alpha^{-n}$ for all $x\in M $ and for all $ n\geq 1$,     then
 $${m(Df^{n}|_{F(x)})}=\frac{1}{\|Df^{-n}|_{F(f^n(x))}\|}\geq C^{-1} \alpha^n.$$

 So, for any  regular point $x \in M$ and any vector $v$ in the Oseledets subspace $V  \subset F(x)$ with minimum Lyapunov exponent $\chi_{\dim (F)} (x)$ along $F$, we have:
  $$ \chi_{\dim (F)} (x)   =  \lim_{n \rightarrow \pm \infty} \frac{1}{n}  \, {\log \Big(\|Df^{n} v\|/\|v\|\Big)} \geq  \limsup_{n \rightarrow + \infty} \frac{1}{n} \, {\log \Big( m(Df^{n}|_{F(x)}\Big) }\geq  $$
     $$\chi_{\dim (F)} (x) \geq \lim_{n \rightarrow + \infty} \frac{1}{n} \log (C^{-1} \alpha^n) = \log \alpha >0.$$
  We have proved that for any regular point, the Lyapunov exponents along $F$   are bounded from below by $\log \alpha >0$.

  Thus, for any $f$- invariant probability measure $\mu$  we obtain:
  \begin{equation} \label{inequalityLemmaBorrado} \int \sum_{i= 1}^{\dim (F)} \chi_i(x) \, d \mu \geq \mbox{dim} (F) \int \chi_{\dim (F)}(x) \, d \mu \geq \mbox{dim}(F) \, \log \alpha. \end{equation}

To end the proof of   assertion (a)     recall (from Theorem \ref{TheoremCE})   that $p\omega(x,f) \subset \mathcal O_f$  for Lebesgue a.e. point $x \in M$. Take any $\mu \in p\omega(x,f)$. Joining Theorem \ref{Dom-EntropyFormula} with inequality (\ref{inequalityLemmaBorrado}), we conclude  $$h_{\mu}(f)\geq\int  \sum_{i=1}^{\dim\,F}\chi_i(x)d\mu= \geq  \mbox{dim}(F)\log \alpha,$$
as wanted. Finally, to prove assertion (b)    just apply assertion (a) replacing $f$ by $f^{-1}$ and $F$ by $E$. \end{proof}

\vspace{.3cm}

\section{Appendix}   \label{sectionExamples}
In this section we show two examples. The first   one, in Subsections \ref{subsectionExample0}, is a counterexample  that shows that the converse statement  of Theorems \ref{Dom-MainTheorem0} is false. The second example,  in Subsection \ref{subsectionExampleGourmelon-Potrie},   shows that the answer to Question \ref{Que} is negative. Namely, not all the diffeomorphisms  with (global and uniform) dominated splitting have positive topological entropy.

\subsection{Positive entropy with non-recurrent Lebesgue measure.} \label{sectionExample} \label{subsectionExample0}

 In this section we construct a simple example to show  that the hypothesis of $\delta$-recurrence of the Lebesgue measure  in Theorem \ref{Dom-MainTheorem0} (for some $0 < \delta < 1$) is  not necessary to have positive entropy.

\vspace{.3cm}

Consider the torus $\mathbb{T}^2$ and an area-preserving linear Anosov diffeomorphism $f_2: \mathbb{T}^2 \mapsto \mathbb{T}^2$ with expanding eigenvalue $\sigma_2>1$ and contracting eigenvalue $0 <\lambda_2 = \sigma_2^{-1} < 1$.
Denote by  $T \, \mathbb{T}^2= S \oplus U$  the hyperbolic splitting for $f_2$, where $S$ and $U$ are the stable and unstable sub-bundles respectively. Consider the circle $\mathbb{S}^1$ and a Morse-Smale order preserving diffeomorphism $f_1: \mathbb{S}^1 \mapsto \mathbb{S}^1$  having exactly two fixed points: a  hyperbolic sink $x_1$ and a  hyperbolic source $x_2$    such that $$0 <\lambda_1 := f'(x_1)= \min_{x \in \mathbb{S}^2} f'(x) < 1, \ \ \ 1 < \sigma_1 := f'(x_2) = \max_{x \in \mathbb{S}^2} f'(x) < \sigma_2.$$

Construct  $f: \mathbb{T}^3 = \mathbb{S}^1 \times \mathbb{T}^2 \mapsto \mathbb{T}^3 $  defined by  $ f(x,y) = (f_1(x), f_2(y)) \ \  \forall \ (x,y) \in \mathbb{S}^1  \times \mathbb{T}^2 .$
By construction $f$ has the following  $\sigma$-dominated splitting  $T \,  {\mathbb{T}^3}  = E \oplus F ,\  \mbox{ where } \  E = T \,   \mathbb{S}^1   \oplus S, \   F = U , \  \sigma = \sigma_2/\sigma_1> 1. $
Besides, $f$ has positive entropy. In fact, $f$ is the product map $f_1 \times f_2$, and so
 $h_{\topo}(f) = h_{\topo} (f_1) + h_{\topo}(f_2) = h_{\topo}(f_2) = \log \sigma_2 >0.$

Note that $f$ is non transitive, since it is a product map $f_1 \times f_2$ and $f_1$ is non transitive. So, this example shows that non transitive diffeomorphisms with dominated splitting may have positive entropy. Moreover, the wandering set is all the manifold except $\{x_0,x_1\} \times \mathbb{T}^2$. Thus, the wandering set has full Lebesgue measure, but even so, the entropy is positive.

For any $0 <\delta <1$ take in the circle $S^1$ two disjoint open neighborhoods of $x_1$ and $x_2$ such that the sum of their lengths is smaller than $\delta >0$. Denote by $K$ the compact set in $S^1$ that is the complement of the union of both open neighborhoods. Thus $$\Leb_{S^1}(K) > 1 - \delta, \ \ \Leb _{\mathbb{T}^3} (K \times \mathbb{T}^2) > 1- \epsilon.$$ Since $x_2$ is the omega-limit set in $S^1$ of all the orbits   by $f_1$, there exists $N \geq 1$ such that
 $f_1^n(K ) \cap K = \emptyset \ \ \forall \ n \geq N.$
Therefore,
 $f^n(K \times \mathbb{T}^2) \cap (K \times \mathbb{T}^2) = \emptyset \ \ \forall \ n \geq N.$
We conclude that the Lebesgue measure in $\mathbb{T}^3$ is non $\delta$-recurrent by $f$. Since $0 < \delta < 1$ can be arbitrarily chosen, this example shows  that diffeomorphisms with (global and uniform) dominated splitting and positive entropy exist such that the Lebesgue measure is non $\delta$-recurrent for all $0 < \delta < 1$. In brief, by Theorem \ref{Theorem1}, the $\delta$-recurrence condition of the Lebesgue measure is sufficient to ensure the positive entropy, but is is not a necessary condition.

\subsection{Gourmelon-Potrie example  \cite{Gourmelon-Potrie}}
 \label{subsectionExampleGourmelon-Potrie}
When searching for diffeomorphisms on surfaces, with dominated splitting and zero-entropy, one  must take into account two facts:

1) We are referring to Definition \ref{def:dominated} of a (global and uniform)   dominated splitting $TM= E \oplus F$, which must exist and \em be continuous \em at \em any   point \em of $TM$.

2)  The \em only surfaces \em admitting a non trivial global continuous  splitting are those that admit   a one-dimensional (never null)  sub-bundle of its tangent bundle,  \em continuously defined everywhere in $TM$. \em A classical theorem of   differential geometry states that those surfaces are only  the two torus $\mathbb{T}^2$ and the Klein bottle $\mathbb{K}^2$.

\vspace{.3cm}

  Considerations  1) and 2) impose strong restrictions that make  difficult the construction of  an example $f \in \mbox{Diff}^1_{DS}(M)$ with zero topological entropy, also in the case that $M$ has the lowest dimension (i.e $M$ is a surface, hence $M= \mathbb{T}^2$ or $M = \mathbb{K}^2$). Gourmelon-Potrie in \cite{Gourmelon-Potrie} have recently achieved such a construction  on $\mathbb{T}^2$, which we write here for the seek of completeness.

\noindent Take the covering $\pi: \mathbb{R}^2  \mapsto \mathbb{T}^2 = \big( \mathbb{R} / 2 \mathbb{Z}\big )^2$ defined by
 $  \pi(  x,  y) = \pi(  x',   y') \ \mbox{ if and only if } \  x- x'\in 2 \mathbb{Z}, \ \   y -   y' \in 2 \mathbb{Z}.$
For the seek of simplicity, we  will denote by $(x,y)$ a point in $\mathbb{R}^2$ and  also the point $\pi(x,y) \in \mathbb{T}^2$.
Fix two real numbers $a, b$ such that $0 < b < a < 1$, and
 define the following vector field in $\mathbb{T}^2$:
$$X(x,y) := \Big( \sin \pi x, \ \ \ a + b \cos \pi x    \Big).$$
Denote by $\phi(x,y,t)$ the tangent flow to $X$ (see Figure \ref{Figure1}). Namely,
 $\frac{d\phi}{dt} = X(\phi) \ \ \forall \ t \in \mathbb{R}, \ \ \ \phi(x,y,0) = (x,y).$
Define   the diffeomorphism $f: \mathbb{T}^2 \mapsto \mathbb{T}^2$ as the time 1 map of the flow $\phi$, i.e.
 $f(x,y) := \phi(x,y,1) \ \ \forall \ (x,y) \in \mathbb{T}^2.$ First, let us show that $h_{\topo}(f)= 0$, and second, let us construct a  global  uniformly  dominated splitting of for $f$.

\begin{Lem}
\label{lemma001}
 The   example $f \in \mbox{Diff}^1(\mathbb{T}^2)$ above constructed has null topological entropy.\end{Lem}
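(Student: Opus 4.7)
The plan is to locate the non-wandering set $\Omega(f)$ inside a finite union of invariant circles and then invoke two classical facts: the topological entropy of a continuous self-map of a compact metric space equals that of its restriction to $\Omega(f)$, and any homeomorphism of the circle has zero topological entropy.

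First, I would identify the candidate non-wandering set. Since $\sin \pi x$ vanishes exactly at $x \equiv 0$ and $x \equiv 1 \pmod{2}$, the only zeros of the first component of $X$ form the two circles $\Gamma_0 := \{0\} \times \mathbb{T}^1$ and $\Gamma_1 := \{1\} \times \mathbb{T}^1$. On these circles the vector field reduces to the constant vertical vectors $(0, a+b)$ and $(0, a-b)$ respectively, both with strictly positive second component since $0 < b < a$. Hence the flow $\phi_t$ (and in particular $f = \phi_1$) preserves each $\Gamma_i$, and the restriction $f|_{\Gamma_i}$ is an isometric rotation of $\mathbb{T}^1$ by the corresponding speed modulo $2$.

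Next, I would show that every point off $\Gamma_0 \cup \Gamma_1$ is wandering. The complement decomposes as two open annuli $A_+ := (0,1) \times \mathbb{T}^1$ and $A_- := (1,2) \times \mathbb{T}^1$, on which $\sin \pi x$ is strictly positive and strictly negative respectively. Thus $t \mapsto x(\phi_t(p))$ is strictly monotone on each annulus. For any $p \in A_+$, choose an open box $U \ni p$ whose $x$-projection is a short interval $(x_0-\varepsilon, x_0+\varepsilon) \subset (0,1)$. Since $\dot{x} = \sin \pi x$ is bounded below by a positive constant on the compact set $[x_0-\varepsilon, x_0+\varepsilon] \times \mathbb{T}^1$, the $x$-coordinate of $f^n(q)$ exceeds $x_0+\varepsilon$ for every $q \in U$ and every $n \geq 1$, so $f^n(U) \cap U = \emptyset$ for all $n \geq 1$; the symmetric argument works in $A_-$. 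Hence $\Omega(f) \subseteq \Gamma_0 \cup \Gamma_1$.

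Finally, I combine the two observations. By the variational principle, $h_{\topo}(f) = \sup_{\mu} h_\mu(f)$ over all $f$-invariant Borel probability measures $\mu$; by Poincar\'e recurrence every such $\mu$ is supported on $\Omega(f)$, so $h_\mu(f) = h_\mu(f|_{\Omega(f)})$, and a second application of the variational principle on $\Omega(f)$ yields $h_{\topo}(f) = h_{\topo}(f|_{\Omega(f)})$. Since $\Omega(f)$ is contained in a disjoint union of two $f$-invariant circles on each of which $f$ acts as a circle homeomorphism, and any homeomorphism of $\mathbb{T}^1$ has vanishing topological entropy, we conclude $h_{\topo}(f) = 0$. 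The main obstacle is essentially bookkeeping: justifying the uniform positive lower bound on $|\dot{x}|$ on compact subannuli and cleanly reducing to $\Omega(f)$. No genuinely analytic difficulty is expected, since the conclusion is driven by classical results.
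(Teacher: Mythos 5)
Your proposal is correct and follows essentially the same route as the paper: identify the two invariant circles on which $f$ acts as a rotation, show every point of the complementary annuli is wandering because the horizontal component of $X$ is positive (resp.\ negative) and bounded away from zero on compact subsets there, and then conclude via the variational principle that all invariant measures live on the circles, where the entropy vanishes. The only difference is cosmetic: the paper phrases the second step through $\alpha$- and $\omega$-limit sets landing in $S^1_0$ and $S^1_1$, while you argue directly that small boxes never return, which is the same estimate.
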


{\em Proof: } It is standard to check that the circles $S^1_0 = \{(x,y) \in \mathbb{T}^2: x= 0\}, \ \ \ \ S^1_{1} = \{(x,y) \in \mathbb{T}^2: x= 0\}$  are invariant by $f$. In fact,  $X(0,y) = (0,a + b), \ \phi(0, y, t) =\big(0, y + (a +b) t\big), \ \ f(0,y) = \big(0, y + (a +b) \big). $  So $f$ restricted to the circle $S_0^1$ is the rotation of angle $a + b >0$. Analogously
  $X(1,y) = (0,a-b), \ \phi(1, y, t) =\big(1, y + (a-b)t\big), \ \ f(1,y) = \big(1, y + (a-b)\big). $  So $f$ restricted to the circle $S_1^1$ is the rotation of angle $a-b >0$.

   Besides,   any orbit by $f$ that does not intersect the circles $S^1_0 \cap S_1^1$ has its $\alpha$-limit set contained in $S^1_0$ and its $\omega$-limit set contained in $S^1_1$ (see Figure \ref{Figure1}). This is because for any compact set in $\mathbb{T}^2 \cap \{0 < x < 1\}$ the horizontal component of the vector field $X$ is positive and bounded away from zero, and this horizontal component  changes its sign when applying the symmetry $(x,y) \mapsto (-x,y)$.

   Thus, the wandering set contains $\mathbb{T}^2  \setminus (S^1_0 \cup S^1_1)$. The recurrent points, and hence the support of all the invariant measures, are contained in $(S^1_0 \cup S^1_1)$. So, they are invariant measures by a rotation in the circle. Since the  entropy of   a rotation in the circle is null, we conclude that  the entropy of $f$ in the torus is also null. \hfill $\Box$

{\begin{figure}
[h]
\begin{center}
\vspace{0cm}
\includegraphics[scale=.4]{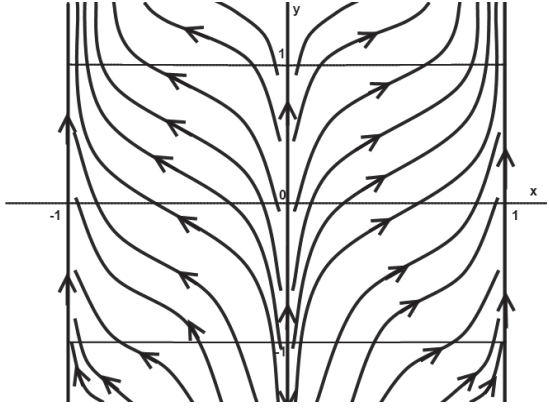}
\vspace{0cm}
\caption{\label{Figure1}  The flow $\phi$ tangent to the vector field $X$.}
\vspace{-.5cm}
\end{center}
\end{figure}}

\begin{Lem} \label{lemma002}
The Gourmelon-Potrie example $f \in \mbox{Diff}^1(\mathbb{T}^2)$ above constructed has a global uniformly dominated splitting.
\end{Lem}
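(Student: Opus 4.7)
The plan is to exhibit two continuous $Df$-invariant sub-bundles $E$ and $F$ of $T\mathbb{T}^2$, everywhere transverse, and to verify the pointwise $\sigma$-domination inequality for some $\sigma > 1$. Everything will be read off the skew-product structure of $f$: since the $x$-equation $\dot x = \sin(\pi x)$ is independent of $y$, the time-$1$ map has the form $f(x,y) = (g(x),\, y + h(x))$ for an explicit $g\colon \mathbb{S}^1\to\mathbb{S}^1$ and $h\colon \mathbb{S}^1\to\mathbb{R}$, and the differential is the lower-triangular matrix
\[
Df(x,y) \;=\; \begin{pmatrix} g'(x) & 0 \\ h'(x) & 1 \end{pmatrix},
\]
with $g'(0)=e^{\pi}$ and $g'(1)=e^{-\pi}$ at the source and sink of $g$.

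First I would take $E = E^v := \mathbb{R}\,\partial_y$; the lower-triangular form of $Df$ immediately gives $Df$-invariance and $\|Df|_E\|\equiv 1$. The second bundle $F$ would be sought as a line field of slope $\beta(x)$, and $Df$-invariance forces $\beta$ to satisfy the cohomological transfer equation $\beta(g(x)) = (h'(x) + \beta(x))/g'(x)$. Iterating this equation backward along $g$ gives the telescoping series
\[
\beta(x) \;=\; \sum_{j \ge 1}(g^{-j})'(x)\,h'(g^{-j}(x)),
\]
whose convergence is ensured by the backward expansion rate $e^{\pi}$ at the source $x = 0$ of $g$ combined with the vanishing of $h'$ at $0$. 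Once $F$ is constructed, transversality to $E^v$ and the $\sigma$-domination $\|Df|_{E^v}\|/m(Df|_F) \le \sigma^{-1}$ would follow from a direct computation in skew-product coordinates, with $\sigma$ controlled by the hyperbolic rate $e^{\pi}$ of $g$ at its fixed points.

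The main obstacle is showing that the slope function $\beta$ actually defines a \emph{continuous} line field $F$ on all of $\mathbb{T}^2$ that is \emph{everywhere transverse} to $E^v$. The difficulty is localized at the two fixed circles $S^1_0 = \{x=0\}$ and $S^1_1 = \{x=1\}$ and at any neutral point where $g'(x)=1$: at such points $Df$ degenerates to a pure shear whose only algebraically invariant line is the vertical one, so the slope $\beta(x)$ formally tends to $\pm\infty$. The crux of the argument is therefore a careful $\mathbb{RP}^1$-continuity statement showing that the line field $F$ extends continuously through these degenerate points, while still remaining distinct from $E^v$ there, so that $(E^v, F)$ is a genuine splitting on the whole of $\mathbb{T}^2$. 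Once this continuity and transversality are secured, the uniform domination constant $\sigma$ follows from the explicit rates $e^{\pm\pi}$ at the fixed points of $g$ together with a standard compactness-invariance argument on $\mathbb{T}^2$.
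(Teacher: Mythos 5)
Your overall strategy has a fatal structural flaw: the globally vertical bundle $E=E^v=\mathbb{R}\,\partial_y$ cannot be the dominated bundle of any dominated splitting for this $f$, so the ``crux'' you defer (projective continuity of $F$ through the degenerate circles while staying transverse to $E^v$) is not merely delicate --- it is false. On the invariant circle $S^1_1=\{x=1\}$ the map is a rigid rotation and the derivative is the constant matrix $\mathrm{diag}(e^{-\pi},1)$, so a continuous $Df$-invariant line field over $S^1_1$ with slope $\beta(y)$ must satisfy $\beta(y+(a-b))=e^{\pi}\beta(y)$, forcing $\beta\equiv 0$ or $\beta\equiv\infty$: the only candidates are the horizontal and the vertical directions. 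If your $F$ is horizontal on $S^1_1$, then there $\|Df^n|_{E^v}\|/m(Df^n|_F)=e^{\pi n}\to\infty$, so $F$ is dominated by $E^v$ rather than dominating it; if $F$ is vertical there, it coincides with $E^v$ and the splitting collapses. And the dynamically defined $F$ (your telescoping series for $\beta$) really does degenerate: the paper proves in (\ref{eqn88}) that the forward-invariant line field converges to the vertical direction as $x\to 1$. There is also a quick global obstruction to your plan: since the vertical bundle is exactly $Df$-invariant with $\|Df^n|_{E^v}\|\equiv 1$, domination of $E^v$ by any $F$ would force $m(Df^n|_F)\ge C^{-1}\sigma^{n}$, i.e.\ $f$ would be partially hyperbolic in the sense of Definition \ref{DefinitionPartialHyperbolicity}, hence would have positive topological entropy by Theorem \ref{PartialHyper-MainTheorem}, contradicting Lemma \ref{lemma001}.

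The correct construction (the paper's) lets \emph{both} bundles rotate by $\pi/2$ across the wandering annulus: $E$ is vertical on $S^1_0$ and horizontal on $S^1_1$, while $F$ is horizontal on $S^1_0$ and vertical on $S^1_1$. One builds $F$ by forward iteration of the horizontal direction in a cone/graph-transform argument on a neighborhood $U_0$ of $S^1_0$, builds $E$ symmetrically by \emph{backward} iteration near $S^1_1$, transports both along the wandering orbits, and then proves the projective limits at the opposite circles ($F\to$ vertical at $S^1_1$, $E\to$ vertical at $S^1_0$, via the slopes tending to $+\infty$), transversality of $E$ and $F$ on the compact middle region, and uniform domination near the circles plus a compactness constant elsewhere. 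So the repair of your proposal is not a sharper $\mathbb{RP}^1$-continuity lemma for a fixed vertical $E^v$; it is to abandon the globally vertical $E$ and construct $E$ itself dynamically, mirroring your construction of $F$ but in backward time near $S^1_1$.
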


\begin{proof}
Before constructing a dominated splitting for $f$, let us compute the derivative $Df$.
Since $\phi(x,y,t)$ is the solution of the differential equation $d\phi/dt = X(\phi)$ where $X   $ is a  $C^{1}$ vector field, we have:
 $\frac{d D\phi}{dt} = DX \cdot D\phi, \ \ \ D\phi = e^{{t \, DX} },$
where, for any $2 \times 2$ matrix $A$,    the exponential matrix $e^A$ is defined by
 $e^A = I + A + \frac{A^2}{2}  + \ldots + \frac{A^n}{n!} + \ldots$
For fixed $t= 1$ we obtain:
 $Df = e^{DX}, \mbox { where } DX = \left(
                                      \begin{array}{cc}
                                        \pi cos \pi x & 0 \\
                                        -b \pi \sin \pi x & 0 \\
                                      \end{array}
                                    \right),$
                                    and so
 \begin{equation}
 \label{eqnDf}
 Df = \left(
                                      \begin{array}{ll}
                                       \ \  e^{\pi cos \pi x} & 0 \\
                                        -e^{\pi cos \pi x} b \pi \sin \pi x & 1 \\
                                      \end{array}
                                    \right). \end{equation}
 Along the $f$-invariant circles $S^1_0$ and $S^1_1$, we  obtain:
 \begin{equation}
 \label{eqn81}
 Df_{(0,y)} = \displaystyle \left(
                                      \begin{array}{cc}
                                        e^{\pi} & 0 \\
                                        0 & 1 \\
                                      \end{array}
                                    \right), \ \ \ \ \
 Df_{(1,y)} = \displaystyle \left(
                                      \begin{array}{cc}
                                        e^{-\pi} & 0 \\
                                        0 & 1 \\
                                      \end{array}
                                    \right).\end{equation}
Thus, we can define the dominated splitting along   $S^1_0$ and $S^1_1$   as follows:
$$TM_{(0,y)} = E_{(0,y)} \oplus F_{(0,y)}, \ \ \ TM_{(1,y)} = E_{(1,y)} \oplus F_{(1,y)}, \mbox{ where } $$
\begin{equation}
\label{eqn82}
E_{(0,y)} = [(0,1)], \ \ F_{(0,y)} = [(1,0)], \ \ \ E_{(1,y)} = [(-1,0)], \ \ F_{(1,y)} =[(0,1)].\end{equation}
(The symbol $[v]$ denotes the subspace generated by $v$.)
To extend the dominated splitting to all the points of the wandering set, we will construct  $TM_{(x,y)} = E_{(x,y)} \oplus F_{(x,y)}$ for $0 < x < 1$, and then by symmetry of with respect to the axis $x= 0$, the splitting for $-1 < x < 0$ will be obtain as the symmetric of the splitting for $0 < x < 1$.

 Since $X(x,y)$, $D\phi(x,y,t)$ and $Df(x,y)$ are independent of $y$, we will define an splitting $  E_{(x,y)} \oplus F_{(x,y)}$  independent of $y$. We will construct it such that:

\vspace{.3cm}

  \noindent 1) When $x\rightarrow 0$, $E$ and $F$ converge  to $[(0,1)] $ and $ [(1,0)]$ respectively.

\noindent 2)  When $x\rightarrow 1$ $E$ and $F$ converge to $[(1,0)] $ and $ [(0,-1)]$ respectively.

\noindent 3) When $x$ increases in the interval $0 <x < 1$, $E$  and $F $ rotate  clockwise    angles  $0 < \psi_E(x), \psi_F(x)  < \pi/2$  that depend  continuously on $x$.

\noindent 4) The sub-bundles $E$ and $F$ are $Df$ invariant; i.e.

$Df(x,y)|_F(x,y) = F(f(x,y)), \ \ Df^{-1}(x,y)|_{E(x,y)} = E(f^{-1}(x,y)) \ \ \forall \ (x,y) \in \mathbb{T}^2.$

\noindent 5) The domination property holds. Namely, there exists    $C, \sigma >1$ such that
\begin{equation}
 \label{eqn80}
 |Df|_{E(x,y)}| \cdot  |Df^{-1}|_{F(f(x,y))}| < C \cdot \sigma ^{-1} \ \ \forall \ (x,y) \in \mathbb{T}^2. \end{equation}

Define  $\sigma:= e ^{\pi}/2 > 1$.
From equalities (\ref{eqn81}) and (\ref{eqn82})  along the invariant circles $S^1_0$ and $S^1_1$,  we obtain
 $|Df|_{F(0,y)}| = |Df(0,y)|_{[(1,0)]}| = e^{\pi}$ and $ \|Df|_{E(0,y)}| = |Df(0,y)|_{[(0,1)]}| = 1. $
 So
  $Df|_{E(0,y)} \cdot |Df^{-1}|_{F(f(0,y))}| =   e ^{-\pi } =  {\sigma^{-1}}/{2}.$
Analogously $|Df|_{E(1,y)}| = |Df(1,y)|_{[(1,0)]}| = e^{-\pi}$ and $  |Df|_{F(1,y)}| = |Df(1,y)|_{[(0,1)]}| = 1. $
 Hence \begin{equation}
 \label{eqn88_00}
  | Df|_{E(1,y)} | \cdot |Df^{-1}|_{F(f(1,y))}| =   e ^{-\pi } = \frac{ \sigma^{-1}}{2} <  \sigma^{-1}<1.\end{equation}
From the continuity of $Df$, there exists neighborhoods $U_0$ and $U_1$  of the circles $S^1_0$ and $S^1_1$ respectively, and   $\epsilon$-small open cones  $V_0$ and $V_1$ in the tangent bundles $TU_0$ and $TU_1$ respectively, containing the direction  $[(0,1)]$,   such that,
for all $(x,y) \in U_0 $ and for all $\ v_0 \in V_0$:
  \begin{equation}
  \label{eqn84}
  f^{-1}(x,y) \in U_0, \ \ \  [Df(x,y) v_0] \subset V_0   \ \ \mbox { and } \ \ \prec\big( Df(x,y) V_0  \big) < \sigma^{-1} \epsilon,   \end{equation} where $\prec\!\! V $ denotes the angle (not larger than $\pi$) of $V$, for any cone $V$ of directions in the tangent space $T_{(x,y)}\mathbb{T}^2 \equiv \mathbb{R}^2$.

    Analogously, replacing $f , E, F, S^1_0$ and $U_0, V_0$ by  $f^{-1}, F, E, S^1_1$ and $U_1, V_1$ respectively, we obtain the following assertions
for all $(x,y) \in U_1 $ and for all $\ v_1 \in V_1$:
     \begin{equation}
  \label{eqn86}f(x,y) \in U_1, \ \ \  [Df^{-1}(x,y) v_1] \subset V_1   \ \  \mbox { and } \ \ \prec\big( Df^{-1}(x,y) V_1  \big) < \sigma^{-1} \epsilon.\end{equation}

 Now, we will define a  continuous invariant sub-bundle $F$ in $TU_0$ and a continuous invariant    sub-bundle $E$ in $TU_1$, by the limits of the following   sub-bundles, which are uniformly convergent on $U_0$ and $U_1$ respectively  because $f^{-1}(U_0) \subset U_0, \ \ f(U_1) \subset U_1$ and due to  inequalities (\ref{eqn84}) and (\ref{eqn86}):
 \begin{equation}
 \label{eqn89a}
  \mbox{If } (x,y) \in U_0, \ \ \ \ F(x,y) := \lim_{n \rightarrow + \infty} Df^n(f^{-n}(x,y)) [(1,0)].
 \end{equation} \begin{equation}
 \label{eqn89b}
 \mbox{If } (x,y) \in U_1, \ \ \ \ E(x,y) := \lim_{n \rightarrow + \infty} Df^{-n}(f^{n}(x,y)) [(-1,0)].
 \end{equation}

By construction while $f^k(x,y) $ remains in $U_0$   the subspace $F_{f^k(x,y)}$ is $Df$-invariant. Analogously, while $f^{-k}(x,y)$ remains in $U_1$  the subspace $E_{f^{-k}}(x,y)$ is   $Df$-invariant. Besides, since $F$ and $E$ are continuous (on $U_0$ and $U_1$ respectively), and $[1,0]$ is invariant by $Df_{0,y}$ and $Df_{1,y}$, we deduce: $$\lim_{x \rightarrow 0} F(x,y) = F(0,y) = [(1,0)], \ \ \ \lim_{x \rightarrow 1} E(x,y) = E(1,y) = [(-1,0)].$$

Now let us extend continuously the invariant bundles $F$ and $E$ to the open subset   $  \mathbb{T}^2 \setminus (S^1_0 \cup S^1_1)$ in such a way that they remain $Df$-invariant.
For any $(x,y)$  such that $0 < x < 1$, there exists $N = N(x) \geq 1$ such that $f^{-N}(x,y) \in U_0$ and $f^{N}(x,y) \in U_1$. So, one can define:
\begin{equation}
\label{eqn87}
F(x,y) := Df^N(f^{-N}(x,y))\, F(f^{-N}(x,y)), \ \ \ \ E(x,y) := Df^{-N}(f^{ N}(x,y))\, E(f^{N}(x,y)). \end{equation}
Since $F$ and $E$ where previously defined to be $Df$-invariant and continuous in $U_0$ and $U_1$ respectively, their definition by equalities (\ref{eqn87}) in the   points $(x,y)$ such that $0 < x < 1$ does not depend on the choice of $N= N(x)$ (provided that $N(x)$ is large enough). So, $E$ and $F$ are $Df$-invariant and continuous in the open set $\{0 < x < 1\}$. Besides, $E$  and $F$   satisfy equalities (\ref{eqn89a}) and (\ref{eqn89b}) in the boundary $S^1_0 \cup S^1_1$ of that open set.
To prove that $E$ and $F$ are continuous $Df$-invariant sub-bundles in all the torus, it is left to prove the following equalities:

\begin{equation}
\label{eqn88} \lim_{x \rightarrow 1} F (x,y) = [(0,-1)] = F(1,y), \ \ \ \ \lim_{x \rightarrow 0} E (x,y) = [(0,1)] = E(0,y).
\end{equation}
Let us prove equalities (\ref{eqn88}). From equality (\ref{eqnDf}), for any fixed $ 0 < x < 1$ we have
 $\big[Df^n(x,y)  (1,0)\big] = \big[\big(1, -a_n(x)\big)\big] \ \
\mbox{ where } \     a_n(x) > 0 .$
 Thus, from equalities (\ref{eqn89a}) and (\ref{eqn89b}) we deduce   $F (x,y) = [1, -\alpha(x)] \mbox{ with } a =\lim_{n \rightarrow + \infty} a_n(f^{-n}(x)) \geq 0,  $ for all $(x,y) \in U_0 \setminus S^1_0$.
 But since $[1,0]$ is not $Df (x,y)$-invariant, if $0 < x < 1$, we obtain
  $F (x,y) = [1, -\alpha(x)] \ \ \mbox{ where  } \ \ \alpha(x) > 0 \ \ \forall \ (x,y) \in U_0 \setminus S^1_0.  $

 Now, we use equalities (\ref{eqn87}). We must apply    $Df^N(x,y)$ to the direction $[(1, -\alpha( x ))] $, where $Df$ is given by equality (\ref{eqnDf}), to obtain the direction $F $ at the point $f^N(x,y)$. Denote $\pi_1(x,y)= x, \ \ \pi_2(x,y)= y, \ \ x_1 = \pi_1(f(x,y)) , \ \ \ x_n = \pi_1(f^n(x,y)).$
  Denote   $$F(f(x,y)) = \big [Df(x,y) \cdot \big ( 1, \ -\alpha( x )\big)\big] = \big[\big( 1, \   -\alpha(x_1)   \big) \big].$$   A simple computation using formula (\ref{eqnDf}) gives  $ \alpha(x_1) =   b \pi \sin \pi x + e^{-\pi \cos \pi x } \alpha(x). $
  Thus, \begin{equation} \label{eqn90} \alpha(x_n) >0 \ \ \forall \ 0 < x < 1, \ \ \forall \ n \geq 0.\end{equation} If besides $x$ is sufficiently close to $1$ we have
   $\alpha(x_1) > 2 \alpha(x). $
Recall that $\alpha(x) >0$  and that $\lim_{N \rightarrow + \infty} \mbox{dist}(f^n(x,y), S_1^1)= 0$; namely $\lim_{n \rightarrow + \infty} x_n= 1$.   We deduce that for all $0 <x< 1$  there exists $N_0 = N_0(x)$ such that $\alpha(x_n) > 2^n \alpha(x)$ for all $n \geq N$. So, for any fixed $0 <x < 1$, $\lim_{n \rightarrow + \infty} \alpha(x_n) = + \infty.$

  Take any compact set $D \subset  {T^2} \cap \{0 < x < 1\}$ with non empty interior, such that any orbit  in the past with initial state in $U_1 \setminus S_1^1$ has at least one iterate  in $D$.   Choose any constant $K >0$. From the compactness of $  D$, there exists an  uniform $N \in \mathbb{N}$ such that
  $$\alpha(x_n) > K \ \ \forall \ n \geq N. $$ Construct the open set  $V_1 := \Big \{(x,y) \in U_1 \setminus S_1^1: \ \ f^{-j}(x,y) \not \in  D \ \ \forall \ j \in \{0,1, \ldots, N\} \Big\}.$  This open set $V_1$ is nonempty. In fact, arguing by contradiction, if it were $V_1 = \emptyset$, then the compact set $\bigcup_{j= 0}^N f^j(D)  $, which is at positive distance from $S_1^1$, would contain the open set $U_1 \setminus S_1^1$, which is at zero distance of $S^1_1$.

  By construction, for any point $(x,y) \in V_1$ there exists $n > N$ and $(x', y') = f^{-n}(x,y) \in D$. Thus $\alpha(x) > K$ for all $(x,y) \in V_1$. We have shown that for any constant $K$ there exists a neighborhood $V_1 \cup S_1$  of $S_1$ (with $V_1 \cap S_1 = \emptyset)$,  such that all the points $(x,y)$ in $V_1$ satisfy $\alpha(x) > K$. In other words, $$\lim_{x \rightarrow 1} \alpha(x) = + \infty.$$
  We conclude that
  $$\lim_{x \rightarrow 1} F(x,y) = \lim_{x \rightarrow 1}\Big[\Big(1, -\alpha(x) \Big) \Big] =\lim_{x \rightarrow 1}\Big[\Big (\frac{1}{\alpha(x)}, -1  \Big)\Big] = \Big[(0,1)\Big], $$
  as wanted. We have proved the equality at left in (\ref{eqn88}). To prove the equality at right, substitute $f, F, U_1, x= 1$ by $f^-1, E, U_0, x= 0$, and observe that $f^{-1}$ is the time 1 diffeomorphism of the flow tangent to the vector field $-X$. So $$Df^{-1} (x,y) = e^{-DX} =  \left(
                                      \begin{array}{ll}
                                        e^{-\pi cos \pi x} & 0 \\
                                        e^{-\pi cos \pi x} b \pi \sin \pi x & 1 \\
                                      \end{array}
                                    \right).       $$
  As in the above argument, we denote $x_{-n} = \pi_1f^{-n}(x,y)$ and prove that
        \begin{equation} \label{eqn91} E(x,y) = [(1, \beta(x))], \ \ \mbox{ where } \ \ \beta(x) >0 \ \ \forall \ 0 < x < 1.\end{equation} If besides $x$ is sufficiently close to $0$ we have
   $\beta(x_{-n}) > 2^n \beta(x),  \mbox{ and so } \lim_{x \rightarrow 0} \beta(x)= + \infty.$  We deduce that
  $$\lim_{x \rightarrow 0} E(x,y) = \lim_{x \rightarrow 0} \Big[ \Big( \frac{1}{ \beta(x)}, 1   \Big)  \Big] = \Big[ (0,1)\Big] = E(0,y), $$
  proving inequality at right in (\ref{eqn88}).

  \vspace{.3cm}

  Now, let us prove that $E \oplus F  = T \mathbb{T}^2$. In fact, by construction  both sub-bundles $E$ and $F$ are one-dimensional   and continuous. Since they are transversal along $S^1_0$ and $S^1_1$, they are still transversal in small open neighborhood   $U_0 \cup U_1$ of $S^1_0 \cup S^1_1$. Besides, from inequalities (\ref{eqn90}) and (\ref{eqn91})  they are uniformly transversal in the compact set $D= \{0 < x < 1\} \setminus (U_0 \cup U_1)$, because $-\alpha(x) < 0$ and $\beta(x) >0$, $E$ and $F$ are continuous and the set $D$ is compact. This proves that $E(x,y) \oplus F(x,y) = T_{(x,y)} M$ if $0 \leq x \leq 1$. Using a symmetry argument, one defines the invariant continuous splitting   $E \oplus F$ also on the points $\{-1 \leq x \leq 0\}$.

  Finally, we   prove that the $E \oplus F$ is an uniformly dominated splitting in the whole torus.

  On the one hand, from inequality (\ref{eqn88_00})   and due to the continuity of $Df$, $E$ and $F$, there exists a neighborhood $U$ of $S^1_0 \cup S^1_1$ such that
   $ |Df(x,y)  |_{E(x,y)} | \cdot |Df^{-1}(f(x,y))|_   {F(f(x,y))  }|  <\sigma^{-1} < 1 \ \ \forall \ (x,y) \in U. $ On the other hand, from the compactness of $\mathbb {T}^2 \setminus U$ there exists a constant $C > 1$ such that
   $ |Df(x,y)  |_{E(x,y)} | \cdot |Df^{-1}(f(x,y))|_   {F(f(x,y))  }|  <C \sigma^{-1}  \ \ \forall \ (x,y) \in \mathbb{T}^2 \setminus U. $

  We conclude that there exists $C, \sigma > 1$ such that

  $ |Df(x,y)  |_{E(x,y)} | \cdot |Df^{-1}(f(x,y))|_   {F(f(x,y))  }|  < C \sigma^{-1}  \ \ \forall \ (x,y) \in \mathbb{T}^2 , $
 as wanted. \end{proof}

\section*{Acknowlegements}    The authors thank Rafael Potrie and Jos\'{e} Vieitez  for the rich discussions and suggestions to improve this paper, and  N. Gourmelon and R. Potrie for providing  their example \cite{Gourmelon-Potrie}.


\section*{References}
\begin{enumerate}


\bibitem{BP} L. Barreira, Y. B. Pesin,  {\it Nonuniform hyperbolicity,} Cambridge Univ. Press, Cambridge (2007).

\bibitem{BV} J. Bochi, M. Viana, {\it The Lyapunov exponents of generic volume preserving and symplectic
systems}, Ann. of Math., 161, 2005, 1423-1485.

\bibitem{BDV} C. Bonatti, L. Diaz, M. Viana, {\it Dynamics Beyond Uniform Hyperbolicity: A Global
Geometric and Probabilistic Perspective,} Springer-Verlag Berlin
Heidelberg 2005, 287-293.

\bibitem{Bowen71-trans} R. Bowen,  {\it Periodic point and measures for axiom-A-diffeomorphisms,} Trans. Amer. Math. Soc. 154  (1971), 377-397.

\bibitem{Bowen} R. Bowen, {\it Topological entropy for noncompact sets},  Trans. Amer. Math. Soc. 184 (1973), 125-136.

\bibitem{CCE} E. Catsigeras,   M. Cerminara \& H. Enrich,  {\it Pesin's Entropy Formula for $C^1$ Diffeomorphisms with Dominated Splitting,}  to appear Ergodic Theory and Dynamical Systems.

\bibitem{CE} E. Catsigeras,    H. Enrich,  {\it SRB-like measures  for $C^0$ dynamics,}  Bull. Pol. Acad. Sci. Math. 59, 2011, 151-164.

\bibitem{DGS}
M. Denker, C. Grillenberger and K. Sigmund,  {\it Ergodic Theory on the
Compact Space,} Lecture Notes in Mathematics {\text{527}}.

\bibitem{DFPV} L. J. D\'{\i}az, T. Fisher, M. J. Pacifico, and J. L. Vieitez, {\it Symbolic extensions for partially
hyperbolic diffeomorphisms},
Discrete and Continuous Dynamical Systems,2012, Vol 32, 12, 4195-4207.

\bibitem{Gour} N. Gourmelon, {\it  Addapted metrics for dominated splitting,}  Ergod. Th. and Dyn.
Sys., 27 (2007), 1839-1849.

\bibitem{Gourmelon-Potrie} N. Gourmelon, R. Potrie,{ \it Zero-Entropy Diffeomorphism on $\mathbb{T}^2$ with dominated splitting}, Personal communication, 2015



 \bibitem{LiaoVianaYang}G. Liao, M. Viana, J. Yang, {\it The Entropy Conjecture for Diffeomorphisms away from Tangencies,} Journal of the European Mathematical Society, 2013, 15 (6): 2043-2060.

\bibitem{Os} V. I. Oseledec, {\it Multiplicative ergodic theorem, Lyapunov
characteristic numbers for dynamical systems,} Trans. Moscow Math.
Soc., 19 (1968), 197-221; translated from Russian.

\bibitem{PacVie} M. J. Pacifico and J. L. Vieitez, {\it Entropy-expansiveness and domination for surface diffeomorphisms},
Rev. Mat. Complut., 21: 293-317, 2008.

\bibitem{PS} C. Pfister, W.  Sullivan,
{\it  On the topological entropy of saturated sets,} Ergod. Th. Dynam. Sys.
27, 929-956 (2007).

\bibitem{Qiu}  H. Qiu, {\it  Existence and uniqueness of SRB measure on $C^1$ generic hyperbolic
attractors},  Commun. Math. Phys. 302,  2011, 345-357.

\bibitem{Ruelle} D. Ruelle, {\it Historic behaviour in smooth dynamical systems,} Global Analysis of Dynamical
 Systems (H. W. Broer, B. Krauskopf, and G. Vegter, eds.), Bristol: Institute of Physics
 Publishing, 2001.

\bibitem{SSV} R. Saghin, W. Sun, E. Vargas,
{\it Ergodic properties of time-changes for flows}, preprint.

\bibitem{SVY} Sumi N, Varandas P, Yamamoto K., {\it Partial hyperbolicity and specification},    arXiv:1307.1182,  2013.

\bibitem{SunTian} W. Sun and X. Tian, {\it Dominated Splitting and Pesin's Entropy Formula}, Discrete and Continuous
Dynamical Systems,  2012, Vol. 32(4), 1421-1434.

\bibitem{Takens} F. Takens, {\it Orbits with historic behaviour, or non-existence of averages}, Nonlinearity 21, 2008, T33-T36.

\bibitem{To2010} D. Thompson, {\it The irregular set for maps with the specification property has full topological pressure,}
Dyn. Syst. 25 (2010), no. 1, 25-51.

\bibitem{Walter} P. Walters,  {\it An introduction to ergodic theory,}
Springer-Verlag, 2001.

\bibitem{Yang} J. Yang, {\it $C^1$ dynamics far from tangencies}, preprint.
\end{enumerate}

\end{document}